\documentclass[12pt]{amsart}
\usepackage{tikz-cd}
\usepackage{amsmath}
\usepackage{fourier}
\usepackage{amssymb}
\usepackage{amscd}
\usepackage{amsthm}
\usepackage[centertags]{amsmath}
\usepackage{amsfonts}
\usepackage{newlfont}
\usepackage{graphicx}
\usepackage{amsfonts, amssymb}
\usepackage{mathrsfs}
\usepackage{latexsym}
\usepackage{tikz}
\usepackage{verbatim}
\usepackage[all]{xy}
\usepackage{enumitem}
\usepackage[colorlinks=true,linkcolor=colorref,citecolor=colorcita,urlcolor=colorweb]{hyperref}
\definecolor{colorcita}{RGB}{21,86,130}
\definecolor{colorref}{RGB}{5,10,177}
\definecolor{colorweb}{RGB}{177,6,38}



\newtheorem{theorem}{Theorem}[section]

\newtheorem{proposition}[theorem]{Proposition}
\newtheorem{corollary}[theorem]{Corollary}
\newtheorem{lemma}[theorem]{Lemma}

\theoremstyle{definition}
\newtheorem{remark}[theorem]{Remark}

\theoremstyle{remark}

\newcommand{\dis}{\displaystyle}

\usepackage{mathtools}

\DeclareMathOperator{\re}{Re}

\DeclareMathOperator{\id}{\mathrm{id}}

\newcommand\restrict[1]{\raisebox{-.5ex}{$\vert$}_{#1}}

 \textwidth=17.4cm \textheight=23cm \hoffset=-20.5mm \voffset=-5mm
 \parskip 7.2pt

\begin{document}
\title{The projection constant for the trace class}

\author[Defant]{A.~Defant}
\address{%
Institut f\"{u}r Mathematik,
Carl von Ossietzky Universit\"at,
26111 Oldenburg,
Germany}
\email{defant@mathematik.uni-oldenburg.de}

\author[Galicer]{D.~Galicer}
\address{Departamento de Matem\'{a}tica,
Facultad de Cs. Exactas y Naturales, Universidad de Buenos Aires and IMAS-CONICET. Ciudad Universitaria, Pabell\'on I
(C1428EGA) C.A.B.A., Argentina}
\email{dgalicer$@$dm.uba.ar}

\author[Mansilla]{M~Mansilla}
\address{Departamento de Matem\'{a}tica,
Facultad de Cs. Exactas y Naturales, Universidad de Buenos Aires and IAM-CONICET. Saavedra 15 (C1083ACA) C.A.B.A., Argentina}
\email{mmansilla$@$dm.uba.ar}

\author[Masty{\l}o]{M.~Masty{\l}o}
\address{Faculty of Mathematics and Computer Science, Adam Mickiewicz University, Pozna{\'n}, Uniwersytetu \linebreak Pozna{\'n}skiego 4, 61-614 Pozna{\'n}, Poland}
\email{mieczyslaw.mastylo$@$amu.edu.pl}

\author[Muro]{S.~Muro}
\address{FCEIA, Universidad Nacional de Rosario and CIFASIS, CONICET, Ocampo $\&$ Esmeralda, S2000 Rosario, Argentina}
\email{muro$@$cifasis-conicet.gov.ar}

\date{}

\thanks{The research of the fourth author was supported by the National Science Centre (NCN), Poland, Grant no. 2019/33/B/ST1/00165; the second, third and fifth author were supported by CONICET-PIP 11220200102336 and PICT 2018-4250. The research of the fifth author is additionally supported by ING-586-UNR}

\begin{abstract}
We  study the projection constant of the space of operators on $n$-dimensional Hilbert spaces, with the trace norm, $\mathcal S_1(n)$. We show an
integral formula for the projection constant of  $\mathcal S_1(n)$; namely
$
\boldsymbol{\lambda}\big(\mathcal S_1(n)\big) = n \int_{\mathcal U_n} \vert \text{tr}(U) \vert \,dU \,,
$
where the integration is with respect to the Haar
probability measure on the group $\mathcal U_n$ of unitary operators.  Using a probabilistic
approach, we derive the limit formula
$
\dis\lim_{n\to \infty}
\boldsymbol{\lambda}\big(\mathcal S_1(n)\big)/n
= \sqrt{\pi}/2\,.
$
\end{abstract}

\subjclass[2020]{Primary: 46B06, 46B07,  47B10. Secondary: 43A15, 33C55, 46G25}

\keywords{Projection constant, trace class operators, harmonics, local invariants in Banach spaces}
\maketitle


\section*{Introduction}
The projection constant is a fundamental concept in Banach spaces and their local theory. It has its origins in the study of complemented subspaces of Banach spaces.
If $X$ is
a~complemented subspace of a~Banach space $Y$, then the relative projection constant of $X$ in $Y$ is defined by
\begin{align*}
\boldsymbol{\lambda}(X, Y) & =  \inf\big\{\|P\|: \,\, P\in \mathcal{L}(Y, X),\,\, P|_{X} = Id_X\big\}\\
&  = \inf\big\{c>0: \,\,\text{$\forall\, T \in \mathcal{L}(X, Z)$ \,\, $\exists$\,\, an extension\,
$\widetilde{T}\in \mathcal{L}(Y, Z)$\, with $\|\widetilde{T}\| \leq c\,\|T\|$}\big\}\,,
\end{align*}
where $Id_X$ denotes the identity operator on $X$ and as usual $\mathcal{L}(U,V)$ denotes the Banach space of all bounded linear operators between the Banach spaces $U$ and $V$ with the uniform norm. In what follows $\mathcal{L}(U):=\mathcal{L}(U,U)$. We use here the convention that $\inf \varnothing = \infty.$

The (absolute) projection constant of $X$ is given by
\[
\boldsymbol{\lambda}(X) := \sup \,\,\boldsymbol{\lambda}(I(X),Y)\,,
\]
where the supremum is taken over all Banach spaces $Y$ and isometric embeddings $I\colon X \to Y$.

It is well-known that any Banach space $X$ embeds  isometrically into $\ell_\infty(\Gamma)$, where  $\Gamma$ is a~nonempty set depending on  $X$ (and
$\ell_\infty(\Gamma)$ as usual stands for the Banach space of all bounded scalar-valued functions on $\Gamma$),
and then
\begin{equation}\label{essentialpoint}
  \boldsymbol{\lambda}(X) = \boldsymbol{\lambda}(X, \ell_\infty(\Gamma))\,.
\end{equation}
Thus finding $\boldsymbol{\lambda}(X)$ is equivalent to finding the norm of a~minimal projection from $\ell_\infty(\Gamma)$ onto an isometric copy of
$X$ in $\ell_\infty(\Gamma)$.

Let us recall a few concrete cases relevant for our purposes -
for an extensive treatment on all of this we  refer  to the excellent  monographs
 \cite{ diestel1995absolutely, LT1, pisier1986factorization, tomczak1989banach, wojtaszczyk1996banach}. 
We use standard notation from (local) Banach space theory, and 
 note that, all over the article, all Banach spaces are supposed to be complex. As usual  $\mathcal{L}(X)$  denotes the Banach space of all (bounded) linear
operators $T$ on $X$ together with the operator norm. For  $1\leq p \leq \infty$  
and $n \in \mathbb N$ the symbol $\ell_p^n$ indicates the Banach space  $\mathbb C^n $ 
endowed the  Minkowski norm $\|x\|_p = \big(\sum_{k=1}^n|x_k|^p\big)^{1/p}$
  and $\|x\|_\infty = 
\sup_{1 \leq k \leq n}|x_k|$, respectively.

A well-known simple application of the Hahn-Banach theorem shows that
\[
\boldsymbol{\lambda}(\ell_\infty^n)=1\,.
\]
The exact values of $\boldsymbol{\lambda}(\ell_2^n)$ and $\boldsymbol{\lambda}(\ell_1^n)$ were computed by Gr\"unbaum
\cite{grunbaum} and Rutovitz \cite{rutovitz}: If $d\sigma$ stands for the normalized surface measure on the sphere  $\mathbb{S}_n(\mathbb{C})$, then
\begin{align}
\label{grunbuschC-A}
\boldsymbol{\lambda}\big(\ell_2^n\big)  = n \int_{\mathbb{S}_n} |x_1|\,d\sigma
= \frac{\sqrt{\pi}}{2}   \frac{n!}{\Gamma(n + \frac{1}{2})}\,.
\end{align}
On the other hand, if $dz$ denotes the normalized Lebesgue measure on the distinguished boundary $\mathbb{T}^n$ in $\mathbb{C}^n$
and $J_0$ is the zero Bessel function defined by
$
J_0(t) = \frac{1}{2\pi} \int_{0}^{\infty} \cos( t \cos \varphi) d \varphi\,,
$
then
\begin{align}\label{grunbuschC-B}
\boldsymbol{\lambda}\big(\ell_1^n\big)  = \int_{\mathbb{T}^n} \Big|\sum_{k=1}^{n} z_k\Big|\, dz
= \int_{0}^{\infty} \frac{1 -J_0(t)^n}{t^2} dt\,.
\end{align}
Moreover, K\"onig, Sch\"utt and Tomczak-Jagermann \cite{konig1999projection} proved that for  $1 \le p \le 2$
\begin{align}\label{koenigschuetttomczak}
  \lim_{n\to \infty} \frac{\boldsymbol{\lambda}\big(\ell_p^n\big)}{\sqrt{n}} = \frac{\sqrt{\pi}}{2}\,.
\end{align}

Let us turn to the non-commutative analogs of these results. The operator analog of $\ell_\infty^n$ is the  Banach space
$\mathcal L(\ell_2^n)$.
By \cite[Theorem~5.6]{gordon1974absolutely}  it is known that
\begin{equation*}\label{asy1}
\boldsymbol{\lambda}\big(\mathcal L(\ell_2^n)\big)=\frac{\pi}{4} \frac{n!^2}{\Gamma(n + \frac{1}{2})^2}
\,.
\end{equation*}
The space of Hilbert-Schmidt operators $\mathcal H_2(n)$ on $\ell_2^n$ is a Hilbert space, and we may deduce from~\eqref{grunbuschC-A} that
\[
\boldsymbol{\lambda}\big(\mathcal{H}_2(n)\big) \,\,=\,\, \frac{\sqrt{\pi}}{2}\frac{n^2! }{ \Gamma(n^2 + \frac{1}{2})}\,,
\]
which in particular leads to the two limits
\begin{equation} \label{missing}
\lim_{n \to \infty} \frac{\boldsymbol{\lambda}\big(\mathcal{H}_2(n)\big)}{n} = \frac{\sqrt{\pi}}{2} \,\,\,\,\,\,\,\,
\text{and}\,\,\,\,\,\,\,\, \lim_{n \to \infty} \frac{\boldsymbol{\lambda}\big(\mathcal L(\ell_2^n)\big)}{n} = \frac{\pi}{4}\,.
\end{equation}

Finite dimensional Schatten classes  form  the building blocks
of a variety of natural objects in non-commutative functional analysis. Recall that the singular numbers $(s_k(u))_{k=1}^n$ of  $u \in \mathcal L(\ell_2^n)$ are given by the eigenvalues of $|u| = (u^\ast u)^{1/2}$, and that the Schatten $p$-class
$\mathcal S_p(n), \, 1 \leq p \leq \infty$
by definition is the Banach space  of all operators on $\ell_2^n$ endowed with the norm
$\|u\|_p = \big( \sum_{k=1}^n |s_k(u)|^p  \big)^{1/p}$ (for $p= \infty$ we here  take the maximum over all $1 \leq k \leq n$).
It is well-known that the equalities $\mathcal S_\infty(n) = \mathcal{L}(\ell_2^n)$ and $\mathcal S_2(n) = \mathcal{H}_2(n)$
hold isometrically. We remark that the space $\mathcal S_1(n)$ is usually referred to as trace class.

For the non-commutative analogue of \eqref{grunbuschC-B} in case of  $\mathcal S_1(n)$, the best known estimate
seems
\begin{equation}\label{asytrace}
\frac{n}{3}\le\boldsymbol{\lambda}\big(\mathcal S_1(n)\big)\le n\,.
\end{equation}
The lower bound  was  proved by Gordon and Lewis in \cite{gordon1974absolutely}, while the upper bound is a consequence of the famous Kadets-Snobar inequality \cite{kadecsnobaR}.

As pointed out in  \eqref{grunbuschC-B} there is a useful integral formula for $\boldsymbol{\lambda}\big(\ell_1^n\big)$.
Our main aim is to show a non-commutative analogue for $\boldsymbol{\lambda}\big(\mathcal S_1(n)\big)$, and to employ it to get the missing limit  from \eqref{missing}. More precisely, we prove that
\[
\boldsymbol{\lambda}\big(\mathcal S_1(n)\big) = n \int_{\mathcal U_n} \vert \text{tr}(U) \vert \,dU \,,
\]
where $\text{tr}(U)$ denotes the the trace of the matrix $U$ and the integration is with respect to the Haar
probability measure on the unitary group $\mathcal U_n$;
and then we apply  a probabilistic
approach (within the  so-called Weingarten calculus) to  derive
\[
\dis\lim_{n\to \infty}
\frac{\boldsymbol{\lambda}\big(\mathcal S_1(n)\big)}{n}
= \frac{\sqrt{\pi}}{2}\,.
\]

We finish this introduction with a few words on the technique used.
An important tool to calculate projection constants, and more generally to obtain minimal projections,  is due to Rudin
\cite{rudin1962projections} (see also \cite[Chapter III.B]{wojtaszczyk1996banach}).
This technique is sometimes called Rudin's averaging technique, and it for  example  may be used to prove~\eqref{grunbuschC-A}
as well as~\eqref{grunbuschC-B}.

Given an isometric subspace  $X$  of $Y$, we sketch the most important steps of this strategy
 to find the relative projection constant $\boldsymbol{\lambda}\big(X,Y\big)$: \,
  Select a possible    'natural candidate'  $\mathbf P: Y \to X$ for a~minimal projection;\,
Find a topological group $G$ acting on $\mathcal{L}(Y)$, that is, every $g \in G$ defines operators $T_g$ which
acts in a 'compatible way' on $Y$;\,
Show that $\mathbf P$ in fact is the unique projection, which commutes with all operators $T_g,\, g \in G$;\,
Consider  an arbitrary  projection $\mathbf Q: Y \to X$, and average all operators $T_g^{-1} \mathbf Q T_g$ with respect to the Haar measure
on $G$. Then this average commutes with all operators $T_g,\, g \in G$, and so it  must  coincide with $\mathbf P$;\,
Use a simple convexity argument to show  that $\boldsymbol{\lambda}\big(X, Y\big) = \|\mathbf P: Y \to X\|$;\,
Analyze $\|\mathbf P: Y \to X\|$, in order to  refine the formula for $\boldsymbol{\lambda}\big(X, Y\big)$.

Thus, if here $Y= \ell_\infty(\Gamma)$, then~\eqref{essentialpoint} shows that these steps may lead to a formula/estimate
of $\boldsymbol{\lambda}\big(X)$.
Let us see, how our object  of desire $\mathcal{S}_1(n)$ naturally embeds in some reasonable $\ell_\infty(\Gamma)$.  It is well-known that
$\mathcal L(\ell_2^n)$ and $\mathcal{S}_1(n)$ are in trace duality, that is, the mapping
\begin{equation}\label{dualitytr}
\mathcal{S}_1(n) \to \mathcal L(\ell_2^n)^\ast\,,\,\,\,\,\,\,\,\,\,\,\,\, u \mapsto [v \mapsto \textrm{tr}(uv)]
\end{equation}
defines a~linear and isometric bijection. To go one step further, we may compose this mapping with the restriction
map $\mathcal L(\ell_2^n)^\ast \to C(\mathcal U_n)\,, \,\,u \mapsto u|_{\mathcal U_n}$, where  $\mathcal U_n$ stands for the group of all unitary $n \times n$ matrices, and in fact this leads to an isometric embedding $\mathcal{S}_1(n) \hookrightarrow C(\mathcal U_n)$
(see Proposition~\ref{linco}).
So our aim in the following will be to analyze  the relative projection constant
$\boldsymbol{\lambda}\big(\mathcal{S}_1(n), C(\mathcal U_n)\big)$.

In order to apply  Rudin's avaraging technique  we need to develop what we call
'unitary harmonics' on the unitary group $\mathcal U_n$, which (roughly spoken) are harmonic  polynomials in finitely many 'matrix variables' $z$ and $\overline{z}$ from the  unitary group $\mathcal{U}_n$. All this is deeply inspired by the classical theory of spherical harmonics (see, e.g.,  \cite{rudin1980}), that is, the study of harmonic polynomials in finitely many complex variables $z$ and $\overline{z}$ on
the $n$-dimensional euclidean sphere $\mathbb{S}_n$.
Unitary harmonics and their density in $C(\mathcal{U}_n)$ are described in Section \ref{section: unitary harmonics}.

In
Section~\ref{Projection constants} we
formulate and prove  our  main Theorem~\ref{mainU}.
And although this is the sole focus of this work
- structuring the proof of Theorem~\ref{mainU} carefully, shows that parts of it  extend to a more abstract version
given in Theorem~\ref{abstract}.

\section{Unitary harmonics and their density\label{section: unitary harmonics}}
We need to  extend a few aspects  of the theory of spherical  harmonics on the sphere $\mathbb{S}_n$ (as developed for example in 
\cite[Chapter~12]{rudin1980} and \cite[Chapter~2]{AtkinsonHan2012}) to what we call
unitary harmonics on the unitary 
group~$\mathcal{U}_n$.

\subsection{Unitaries}  \label{Unitaries}
Denote by $M_n$  the space of all $n \times n$-matrices $Z = (z_{k\ell})$ with entries from~$\mathbb{C}$.
The group $\mathcal{U}_n$ of all unitary $n\times n$ matrices $U = (u_{ij})_{1 \leq i,j \leq n}$ endowed with the topology induced by $\mathcal{L}(\ell_2^n)$   forms a~non-abelian compact group. It  is unimodular, and we denote the integral, with respect to the Haar
measure on  $\mathcal{U}_n$, of a function $f \in L_2(\mathcal{U}_n)$  by $$\int_{\mathcal{U}_n} f(U) dU\,.$$ Integrals of this type
form the so-called  Weingarten calculus,  which is  of outstanding importance in random matrix theory, mathematical physics, and the
theory of quantum information (see, e.g., \cite{collins2006integration,kostenberger2021weingarten}).

Basically, we will only need   the precise values  of two concrete integrals from the Weingarten calculus. The first one is
\begin{equation}\label{intformA}
\int_{\mathcal{U}_n} u_{i,j} \overline{u_{k,\ell}}dU = \frac{1}{n} \delta_{i,k}  \delta_{j,\ell}
\end{equation}
for all possible $1\leq i,j,k,\ell \leq n$, and the second one
\begin{equation}\label{intformB}
\int_{\mathcal{U}_n} |\textrm{tr}(AU)|^2 dU = \frac{1}{n} \textrm{tr}(AA^\ast)
\end{equation}
for every $A \in  M_n$ (see, e.g., \cite[p.~16]{cerezo2021cost}, \cite{collins2006integration},
or \cite[Corollary 3.6]{zhang2014matrix}).

Every operator  $T\colon M_n \to M_n$
that leaves $\mathcal U_n$ invariant  (i.e., $T\mathcal U_n\subset \mathcal U_n$), defines a~composition operator
\[
C_{T}\colon L_2(\mathcal U_n) \to L_2(\mathcal U_n)\,,\quad\, f \mapsto f \circ T\,.
\]
There are in fact two such operators $T$, leaving $\mathcal U_n$ invariant,  of special interest -- the left and right multiplication
operators $L_V$ and $R_V$ with respect to $V \in \mathcal U_n $ are given by 
\begin{align*}
L_V (U) := VU \quad \text{and} \quad R_V (U) := UV, \quad\,U \in M_n(\mathbb{C}^n)\,.
\end{align*}
A subspace $S \subset L_2(\mathcal U_n)$ is said to be $\mathcal U_n$-invariant whenever it is invariant under all possible
composition operators $C_{L_V}$ and $C_{R_V}$ with $V \in \mathcal U_n $\,.

For any  closed subspace $S \subset L_2(\mathcal U_n)$, we denote by $\pi_{S}\colon L_2(\mathcal U_n) \to L_2(\mathcal U_n)$
the orthogonal projection on $L_2(\mathcal U_n)$ with range~$S$.

\subsection{Spherical harmonics} \label{real}
The  symbol $\mathcal P (\mathbb{R}^N)$ stands for all  polynomials $f\colon \mathbb{R}^N \to \mathbb{C}$ of the form
 \begin{align} \label{repr}
f\left(x\right)=\sum_{\alpha \in J} c_\alpha\,\,x^\alpha,
\end{align}
where  $J \subset  \mathbb{N}_0^N$ is finite, and $(c_\alpha)_{\alpha\in J}$ are  complex coefficients. Moreover, given  $k \in \mathbb{N}_0$ we write
$\mathcal P_k (\mathbb{R}^N)$  for all $k$-homogeneous polynomials $f$ of this type,
that is, $f$ has a~representation like in \eqref{repr} with  
$|\alpha|:=\sum \alpha_i= k $ for each $\alpha \in J$.

Observe that  in~\eqref{repr} one has $c_\alpha =  \partial^\alpha f(0)/\alpha!$ for each $\alpha \in J$,
which in particular shows the uniqueness of the coefficients
for each $f \in\mathcal P (\mathbb{R}^N)$ (in the following we often write  $ c_\alpha=c_\alpha(f) $).
A particular consequence is that the linear space  $\mathcal P (\mathbb{R}^N)$ carries a natural inner product given by
\begin{equation}\label{inner}
\big\langle f,g\big\rangle_{\mathcal P}:=\sum_{\alpha}\alpha!\,c_\alpha(f)\overline{ c_\alpha(g)}, \quad\,
f,g \in  \mathcal{P}(\mathbb{R}^N)\,.
\end{equation}
This scalar product has a useful reformulation.
To see this, note first that every  $f \in \mathcal{P} (\mathbb{R}^N) $ defines the differential operator
$f(D)\colon \mathcal P (\mathbb{R}^N)\to \mathcal P (\mathbb{R}^N)$ given by
\[
f\left(D\right)g:=\sum_{\alpha}  c_{\alpha}(f) \partial^\alpha g\,,\quad\, g \in \mathcal{P}(\mathbb{R}^N)\,.
\]
And then it is straight forward  to  verify  for every $f,g \in  \mathcal P (\mathbb{R}^N)$ the formula
\begin{equation}\label{diff}
\big\langle f,g\big\rangle_{\mathcal P}=\big[f(D)\overline g\big](0)\,.
\end{equation}
The  polynomial $\mathbf t\in  \mathcal{P}_2 (\mathbb{R}^N)$ defined by
\begin{equation}\label{zahn}
    \mathbf{t}(x):= \|x\|_2^2, \quad\, x\in \mathbb{R}^N
\end{equation}
is of special importance, since then
\[
\Delta : =\mathbf t (D) = \sum_{j=1}^N \frac{\partial^2}{\partial x_{j}^2} \colon
\,\, \mathcal P (\mathbb{R}^N) \to \mathcal P (\mathbb{R}^N)\,
\]
is the Laplace operator. A polynomial $f \in  \mathcal{P}(\mathbb{R}^N)$ is said to be harmonic, whenever
$\Delta f=0$, and we write $\mathcal{H}(\mathbb{R}^N)$ for the subspace of all harmonic polynomials in
$\mathcal{P}(\mathbb{R}^N)$, and $\mathcal{H}_k (\mathbb{R}^N)$ for all $k$-homogeneous, harmonic polynomials. For each $N \in \mathbb{N}$  one has
\begin{equation}\label{ludo1}
    \mathcal H (\mathbb{R}^N)=\text{span}_{k}\mathcal{H}_k (\mathbb{R}^N)\,.
\end{equation}
To see this, fix $f=\sum_{\alpha \in J} c_\alpha(f)\,\,x^\alpha \in \mathcal H (\mathbb{R}^N)$ with degree
$d = \max_{\alpha \in J} |\alpha|$. For each  $k \in \{0, 1,\ldots, d\}$ let
$f_k=\sum_{|\alpha| =k} c_\alpha(f)\,\,x^\alpha$ be the $k$-homogeneous part of $f$. Since $f = \sum_{k=0}^d f_k$,
it remains to show that each $f_k$ is harmonic. Clearly, $\sum_{k=0}^d\triangle f_k =\triangle f=0$. Since  all
$f_k$ are supported on disjoint index sets of multi indices, we conclude that $\triangle f_k =0$ for each
$0 \leq k \leq d$.

Much of what follows is based on the following well-known decomposition of $\mathcal{P}_{k}(\mathbb{R}^N)$ into
harmonic subspaces (see, e.g., \cite[Theorem~2.18]{AtkinsonHan2012}). For the sake of completeness we include
a~proof.

\begin{proposition} \label{realo}
For each $k \in \mathbb{N}_0$ and $N \in \mathbb{N}$
\[
\mathcal{P}_{k}(\mathbb{R}^N)=\mathcal{H}_{k}(\mathbb{R}^N)\,\oplus\,\mathbf t\cdot \mathcal{H}_{k-2}(\mathbb{R}^N)\,\oplus\,\mathbf t^2\cdot  \mathcal{H}_{k-4} (\mathbb{R}^N)\,\oplus\,\,\dots\,\,,
\]
where the orthogonal sum, taken with respect to the inner product from~\eqref{diff}, stops when the subscript reaches $1$ or $0$.
\end{proposition}

\begin{proof}
Given $g\in \mathcal{P}_{k-2} (\mathbb{R}^N)$, we let $h(x):=\mathbf t(x)g(x)$ for all $x \in \mathbb{R}^N$. Since
$\mathbf t(D)=\Delta$, this implies that $h(D)=\Delta \circ g(D)=g(D)\circ \Delta$. Clearly, if now $f\in\mathcal P_k\big(\mathbb{R}^N\big)$,
then by \eqref{diff}
\[
\big\langle h,f\big\rangle_{\mathcal{P}} =\big[h(D)\overline f\big](0) =\big[g(D)\big(\Delta \overline f\big)\big](0)
= \big\langle g,\Delta f\big\rangle_{\mathcal{P}}\,.
\]
\noindent
Thus, $f\perp \mathbf t g$ for every $g\in \mathcal{P}_{k-2}(\mathbb{R}^N)$ is equivalent to  $\Delta f\perp  g$ for every
$g\in  \mathcal{P}_{k-2}(\mathbb{R}^N)$, so to $f \in \mathcal H_k(\mathbb{R}^N)$. As a consequence, we get
\[
\mathcal{P}_k(\mathbb{R}^N)    =\mathcal{H}_k(\mathbb{R}^N)    \oplus \mathbf t\cdot \mathcal{P}_{k-2}(\mathbb{R}^N)\,.
\]
The proof finishes repeating this procedure for $\mathcal{H}_{k-2}(\mathbb{R}^N), \,\mathcal{H}_{k-4}(\mathbb{R}^N), \,\,\ldots$
\end{proof}

By $\mathbb S_N^{\mathbb{R}}$ we denote the sphere in the real Hilbert space $\ell_2^N(\mathbb{R})$. We write
$\mathcal{P}(\mathbb S_N^{\mathbb{R}})$ for the linear space of all restrictions  $f|_{\mathbb S_N^{\mathbb{R}}}$ of polynomials
$f \in \mathcal{P}(\mathbb{R}^N)$, and $\mathcal{P}_k(\mathbb S_N^{\mathbb{R}})$, whenever we only consider restrictions of
$k$-homogeneous polynomials.

All restrictions of harmonic polynomials on $\mathbb{R}^N$, so polynomials in $\mathcal{H}(\mathbb{\mathbb{R}^N})$,
to the sphere $\mathbb S_N^{\mathbb{R}}$  are denoted~by
$
\mathcal{H}(\mathbb S_N^{\mathbb{R}})\,,
$
and  such  polynomials are called spherical harmonics. Similarly, we denote by  $\mathcal{H}_k(\mathbb S_N^{\mathbb{R}})$ the
space collecting all  $k$-homogeneous polynomials restricted to $\mathbb S_N^{\mathbb{R}}$. Endowed with the supremum norm taken
on $\mathbb S_N^{\mathbb{R}}$, both spaces $\mathcal{H}(\mathbb S_N^{\mathbb{R}})$ and $\mathcal{H}_k(\mathbb S_N^{\mathbb{R}})$
form subspaces of $C(\mathbb S_N^{\mathbb{R}})$.

An important fact, not needed here, is that the spaces $\mathcal{H}_k(\mathbb S_N^{\mathbb{R}})$ are pairwise orthogonal
in~$L_2(\mathbb S_N^{\mathbb{R}})$ 
(see, e.g., \cite[Corollary~2.15]{AtkinsonHan2012}).

\subsection{Unitary harmonics} \label{matrix}
Going one step further, we extend the notion of spherical harmonics on the real sphere $\mathbb{S}_N^{\mathbb{R}}$ to what
we  call unitary harmonics on the unitary group $\mathcal{U}_n$.

Recall that  $M_n$  here stands for the space of all $n \times n$-matrices $Z = (z_{k\ell})$ with entries from~$\mathbb{C}$.
 The subset of
such matrices $\alpha  = (\alpha_{k\ell})$ with entries from  $\mathbb{N}_0$ is denoted by $M_n(\mathbb{N}_0)$. For
$Z \in M_n$ and $\alpha  = (\alpha_{k\ell}) \in  M_n(\mathbb{N}_0)$  we define
\[
Z^\alpha = \prod_{k,\ell=1}^{n} z_{k\ell}^{\alpha_{k\ell}}\,.
\]
We  identify $M_n$ with $\mathbb{R}^{2n^2}$ in the canonical way through the  bijective mapping
\begin{equation}\label{idi}
\mathbf{I}_n \colon M_n \longrightarrow \mathbb{R}^{2n^2}\,,
\end{equation}
which assigns to every matrix $Z = (z_{k\ell})_{k\ell}=(x_{k\ell} + i y_{k\ell})_{k\ell}\in M_n$ the element
\[
\big(x_{11},\, y_{11}, \ldots, x_{1n},\, y_{1n},\, x_{21}, \, y_{21},\ldots,x_{2n}, \, y_{2n},
\cdots, x_{n1},\, y_{n1}, \ldots, x_{nn}, \, y_{nn} \big) \in \mathbb{R}^{2n^2}\,.
\]

Then  $\mathfrak P (M_n)$ denotes   the linear space of all polynomials $f = g \circ \mathbf{I}_n$
with $g \in \mathcal{P}\big(\mathbb{R}^{2n^2}\big)$. Hence, by definition, the mapping
\begin{equation}\label{equo-dos}
\mathcal{P}\big(\mathbb{R}^{2n^2}\big) \,\,= \,\,\mathfrak{P}(M_n)\,, \quad g \mapsto g\circ \mathbf{I}_n
\end{equation}
identifies both  spaces as vector spaces.

We collect a couple of useful facts.
Note first that, if  $f=g \circ \mathbf{I}_n \in \mathfrak{P} (M_n)$ with $g \in \mathcal{P} (\mathbb{R}^{2n^2})$, then
\begin{equation*} \label{jo1}
  \Delta f = \sum_{i,j=1}^n\frac{\partial^2f}{\partial z_{ij}\partial\overline{z}_{ij}}
 \,= \,\frac{1}{4}\,\sum_{i,j=1}^n  \Big(\frac{\partial^2 g}{\partial x_{ij}^2} +\frac{\partial^2 g}{\partial y_{ij}^2}\Big)\,,
\end{equation*}
a formula immediate from the definition of
 $\partial_{z_{ij}} = \frac{1}{2}(\partial_{x_{ij}}-i\partial_{y_{ij}})$ and 
$\partial_{\overline{z}_{ij}}=  \frac{1}{2}(\partial_{x_{ij}}+i\partial_{y_{ij}})$. 

Secondly,  a~function $f\colon M_n \to \mathbb{C}$ belongs to $\mathfrak{P}(M_n)$ if and only
if it has a~representation
\begin{equation} \label{representationA}
    f(Z)=\sum_{(\alpha,\beta) \in J} c_{(\alpha,\beta)}\,\,Z^\alpha\overline{Z}^\beta\,,\quad \,\,Z \in M_n\,,
\end{equation}
where $J$ is a finite index set in $M_n(\mathbb{N}_0) \times M_n(\mathbb{N}_0)$ and $c_{(\alpha,\beta)} \in \mathbb{C},\,(\alpha,\beta) \in J$.
Moreover, in this case this representation is unique.

Indeed, if $f$ is given by~\eqref{representationA}, then $g = f \circ \mathbf{I}_n^{-1} \in \mathcal{P}\big(\mathbb{R}^{2n^2}\big)$ and $f = g \circ \mathbf{I}_n \in \mathfrak{P}(M_n)$. Conversely, if $f=g \circ \mathbf{I}_n \in \mathfrak{P}(M_n)$ with $g=\sum_{\alpha} c_\alpha\,x^\alpha \in \mathcal{P}(\mathbb{R}^{2n^2})$, then the desired representation easily follows from the substitution: $\re z_{ij} =\frac{1}{2}( z_{ij}+\overline{z}_{ij})$
and $\text{Im}\, z_{ij} =\frac{1}{2}( z_{ij}-\overline{z}_{ij})$. 
To see the uniqueness of the representation  in \eqref{representationA}  observe that  if  $f=0$,  then, 
given  $(\alpha, \beta) \neq (0,0)$, an application of  the differential operator
\[
\partial_{z_{11}}^{\alpha_{11}}\ldots \partial_{z_{1n}}^{\alpha_{1n}}\,
\partial_{\overline{z}_{11}}^{\beta_{11}}\ldots \partial_{\overline{z}_{1n}}^{\beta_{1n}}
\dots  \dots
\partial_{z_{n1}}^{\alpha_{n1}}\ldots \partial_{z_{n1}}^{\alpha_{nn}}\,
\partial_{\overline{z}_{n1}}^{\beta_{n1}}\ldots \partial_{\overline{z}_{nn}}^{\beta_{nn}}
\]
to $f$ (and evaluating in zero),  shows that  $c_{(\alpha,\beta)} = 0$.

We again use the identification $g \mapsto g\circ \mathbf{I}_n$ from  \eqref{idi} to define the spaces
\begin{equation}\label{okko}
  \text{
  $  \mathfrak{P}_k(M_n):=\mathcal{P}_k\big(\mathbb{R}^{2n^2}\big)\,,
  \quad
    \mathfrak{H}_k(M_n):=\mathcal{H}_k\big(\mathbb{R}^{2n^2}\big)$
  \quad and \quad
  $  \mathfrak{H}(M_n):=\mathcal{H}\big(\mathbb{R}^{2n^2}\big)$\,.}
\end{equation}

The following  lemma gives a~simple description of the elements of  $\mathfrak{P}_k(M_n)$.

\begin{lemma} \label{ludo1000}
Let $ f \in \mathfrak P (M_n)$ and $k \in \mathbb{N}$. Then $ f \in \mathfrak P_k (M_n)$  if and only if
$f(\lambda Z) = \lambda^kf(Z)$ for all $\lambda \in \mathbb{R}$ and $Z \in M_n$.
\end{lemma}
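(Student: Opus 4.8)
The plan is to transport the statement across the $\mathbb{R}$-linear identification $\mathbf{I}_n$ from \eqref{idi} and then reduce to the classical Euler characterization of homogeneous polynomials on a real Euclidean space. First I would record that $\mathbf{I}_n$ is $\mathbb{R}$-linear, so that $\mathbf{I}_n(\lambda Z) = \lambda\,\mathbf{I}_n(Z)$ for every real scalar $\lambda$ and every $Z \in M_n$. This is exactly the place where the restriction $\lambda \in \mathbb{R}$ (rather than $\lambda \in \mathbb{C}$) is indispensable: complex scaling mixes the real and imaginary coordinates and is not respected by $\mathbf{I}_n$.

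Writing $f = g \circ \mathbf{I}_n$ with $g \in \mathcal{P}(\mathbb{R}^{2n^2})$, this observation gives $f(\lambda Z) = g(\lambda\,\mathbf{I}_n(Z))$. Since $\mathbf{I}_n$ is a bijection, the condition that $f(\lambda Z) = \lambda^k f(Z)$ for all $\lambda \in \mathbb{R}$ and $Z \in M_n$ is therefore equivalent to $g(\lambda y) = \lambda^k g(y)$ for all $\lambda \in \mathbb{R}$ and $y \in \mathbb{R}^{2n^2}$. By the definition $\mathfrak{P}_k(M_n) = \mathcal{P}_k(\mathbb{R}^{2n^2})$ in \eqref{okko}, it thus suffices to prove that a polynomial $g \in \mathcal{P}(\mathbb{R}^N)$ lies in $\mathcal{P}_k(\mathbb{R}^N)$ if and only if it satisfies this real homogeneity relation.

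For this final equivalence, the forward implication is a one-line computation: if $g = \sum_{|\alpha|=k} c_\alpha\,y^\alpha$, then $g(\lambda y) = \sum_{|\alpha|=k} c_\alpha \lambda^{|\alpha|} y^\alpha = \lambda^k g(y)$. For the converse I would decompose $g = \sum_{j=0}^d g_j$ into its $j$-homogeneous parts $g_j = \sum_{|\alpha|=j} c_\alpha(g)\,y^\alpha$, which are well defined and unique by the uniqueness of the coefficients noted after \eqref{repr}, exactly as in the argument for \eqref{ludo1}. Fixing $y$ and regarding $\lambda \mapsto g(\lambda y) = \sum_{j=0}^d \lambda^j g_j(y)$ as a polynomial in the single real variable $\lambda$, the hypothesis forces this polynomial to equal $\lambda^k g(y)$; comparing coefficients of the powers of $\lambda$ yields $g_j(y) = 0$ for every $j \neq k$. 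As $y$ was arbitrary, $g_j = 0$ for $j \neq k$, so $g = g_k \in \mathcal{P}_k(\mathbb{R}^N)$, which completes the reduction and hence the proof.

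I do not expect a serious obstacle here; the two points that require a little care are the insistence on real scalars $\lambda$ (so that scaling commutes with $\mathbf{I}_n$) and the appeal to uniqueness of the homogeneous decomposition when comparing coefficients in $\lambda$. Both are already available from the material preceding the lemma, so the argument is essentially a careful bookkeeping of these facts across the identification \eqref{equo-dos}.
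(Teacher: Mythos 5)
Your proposal is correct and follows essentially the same route as the paper: both directions are handled by transporting the statement through the $\mathbb{R}$-linear bijection $\mathbf{I}_n$ to a polynomial $g\in\mathcal{P}(\mathbb{R}^{2n^2})$ and then invoking the uniqueness of the coefficients $c_\alpha(g)$ to conclude that only multi-indices with $|\alpha|=k$ can survive. Your extra step of fixing $y$ and comparing coefficients of the one-variable polynomial $\lambda\mapsto g(\lambda y)$ is just a slightly more explicit rendering of the paper's appeal to coefficient uniqueness.
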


\begin{proof}
For $f \in \mathfrak P_k (M_n)$ there is $g \in \mathcal{P}_k\big(\mathbb{R}^{2n^2}\big)$ such that $f= g\circ \mathbf{I}_n$.
Clearly, $f(\lambda Z) = \lambda^kf( Z)$ for  all  $\lambda \in \mathbb{R}$ and $Z\in M_n$. Assume conversely that $f$ is
$k$-homogeneous in the meaning of the statement. Since $ f \in \mathfrak{P}(M_n)$, there is a~finite polynomial
$g\left(x\right)=\sum_J c_\alpha(g)\,x^\alpha$,\,$x \in \mathbb{R}^{2n^2}$ such that $f = g \circ \mathbf{I}_n$. Since
$g = f \circ \mathbf{I}_n^{-1}$, it follows that $g(\lambda x) = \lambda^kg( x)$ for all  $\lambda \in \mathbb{R}$ and
$x \in \mathbb{R}^{2n^2}$. But this  by the uniqueness of the coefficients $c_\alpha(g)$ necessarily implies that
$c_\alpha(g) \neq 0$ only if $|\alpha|=k$, so as desired $g \in \mathcal{P}_k(\mathbb{R}^{2n^2})$.
\end{proof}

Obviously,
\begin{equation}\label{denso1}
\mathfrak{P}(M_n)=\text{span}_k \mathfrak P_k ( M_n) 
\end{equation}
(consider the polynomials on $\mathbb{R}^{2n^2}$ defining these spaces), and less trivially (as an immediate consequences of
\eqref{ludo1}) we have
\begin{equation}\label{denso5}
\mathfrak H (M_n)=\text{span}_k \mathfrak H_k (M_n)\,.
\end{equation}
The  polynomial $\mathbf{t}_{M_n}\in \mathfrak P_2\big(M_n\big)$ given by
\[
\mathbf{t}_{M_n}(Z):=\mathrm{tr}(ZZ^*), \quad\, Z\in M_n\,,
\]
where $\mathrm{tr}\colon M_n \to \mathbb{C}$ denotes the trace, is of special importance.
It is easily seen that under the  identification from  \eqref{equo-dos} the image of  the polynomial
$\mathbf{t} \in \mathcal{P}_2\big(\mathbb{R}^{2n^2}\big)$ (see again \eqref{zahn}) is $\mathbf{t}_{M_n}\in \mathfrak P_2\big(M_n\big)$,
that~is,
\begin{equation} \label{t=t}
\mathbf{t}_{M_n}(Z) = \mathbf{t}\big(\mathbf{I}_{n} Z\big), \quad\, Z \in M_n\,.
\end{equation}
Recall again that $\mathcal{P}(\mathbb{R}^{2n^2})$ carries the  natural inner product from~\eqref{inner},
which then by the identification in~\eqref{equo-dos} transfers  to a natural inner product on  $\mathfrak{P}(M_n)$, that is,
\begin{equation}\label{innerB}
\big\langle f, g\big\rangle_{\mathcal P}:= \big\langle f \circ \mathbf{I}_n^{-1}, g \circ \mathbf{I}_n^{-1}\big\rangle_{\mathcal P},
\quad\, f,\, g \in \mathfrak{P}(M_n)\,.
\end{equation}
Using~\eqref{okko} and~\eqref{t=t}, we deduce from Proposition~\ref{realo} its matrix analog,
which is going to be of great value later on.

\begin{proposition}\label{matrixo}
For all  $k \in \mathbb{N}_0$ and $n \in \mathbb{N}$
\begin{equation*}
  \mathfrak P_k (M_n)=\mathfrak H_k (M_n)\,\oplus\,\mathbf{t}_{M_n}\cdot \mathfrak H_{k-2} (M_n)\,\oplus\,
  \mathbf{t}_{M_n}^2\cdot \mathfrak H_{k-4} (M_n)\,\oplus\,\,\dots\,\,,
\end{equation*}
where the last term of the orthogonal sum is the span of $\mathbf{t}_{M_n}^{k/2}$ for even $k$, and
$\mathbf{t}_{M_n}^{(k-1)/2}\cdot \mathfrak H_1(M_n)   $ for odd~$k${\rm}.
\end{proposition}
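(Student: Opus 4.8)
The plan is to obtain Proposition~\ref{matrixo} by transporting Proposition~\ref{realo}, applied with $N = 2n^2$, through the identification $\Phi\colon \mathcal{P}(\mathbb{R}^{2n^2}) \to \mathfrak{P}(M_n)$, $g \mapsto g \circ \mathbf{I}_n$, introduced in~\eqref{equo-dos}. First I would record the three structural properties of $\Phi$ that make this work. Since composition with the fixed bijection $\mathbf{I}_n$ respects products of polynomials, $\Phi$ is an algebra isomorphism, that is $\Phi(fg) = \Phi(f)\Phi(g)$. By the very definition~\eqref{innerB} of the inner product on $\mathfrak{P}(M_n)$, the map $\Phi$ is moreover an isometry for the inner products $\langle\cdot,\cdot\rangle_{\mathcal P}$, so it carries orthogonal direct sums to orthogonal direct sums. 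Finally, by the defining equalities in~\eqref{okko} one has $\Phi(\mathcal{P}_k(\mathbb{R}^{2n^2})) = \mathfrak{P}_k(M_n)$ and $\Phi(\mathcal{H}_k(\mathbb{R}^{2n^2})) = \mathfrak{H}_k(M_n)$ for every $k$.

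The one point that genuinely links the two pictures is the behaviour of $\Phi$ on the distinguished quadratic polynomial. Here I would invoke~\eqref{t=t}, which states exactly that $\Phi(\mathbf{t}) = \mathbf{t}_{M_n}$. Combining this with the multiplicativity of $\Phi$ gives, for each $j$,
\[
\Phi\big(\mathbf{t}^{\,j}\cdot \mathcal{H}_{k-2j}(\mathbb{R}^{2n^2})\big) = \mathbf{t}_{M_n}^{\,j}\cdot \mathfrak{H}_{k-2j}(M_n)\,.
\]
In particular $\Phi$ intertwines multiplication by $\mathbf{t}$ on $\mathcal{P}(\mathbb{R}^{2n^2})$ with multiplication by $\mathbf{t}_{M_n}$ on $\mathfrak{P}(M_n)$, which is precisely what is needed to turn each summand of Proposition~\ref{realo} into the corresponding summand of the asserted decomposition.

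With these observations in place, the proof reduces to applying $\Phi$ termwise to the orthogonal decomposition of $\mathcal{P}_k(\mathbb{R}^{2n^2})$ furnished by Proposition~\ref{realo}. Because $\Phi$ is a linear isomorphism preserving both the inner product and the multiplicative structure, the image of that decomposition is an orthogonal direct sum of the spaces $\mathfrak{H}_k(M_n)$, $\mathbf{t}_{M_n}\cdot\mathfrak{H}_{k-2}(M_n)$, $\mathbf{t}_{M_n}^2\cdot\mathfrak{H}_{k-4}(M_n),\dots$, while the left-hand side becomes $\mathfrak{P}_k(M_n)$. The description of the final term is inherited verbatim: for even $k$ it is $\mathbf{t}^{k/2}\cdot\mathcal{H}_0(\mathbb{R}^{2n^2})$, and since the degree-zero harmonics are the constants this maps to $\text{span}\,\mathbf{t}_{M_n}^{k/2}$; for odd $k$ it is $\mathbf{t}^{(k-1)/2}\cdot\mathcal{H}_1(\mathbb{R}^{2n^2})$, which maps to $\mathbf{t}_{M_n}^{(k-1)/2}\cdot\mathfrak{H}_1(M_n)$.

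There is no serious obstacle here; the entire content of the statement is carried by Proposition~\ref{realo}, and the only thing to be careful about is to confirm that $\Phi$ is simultaneously an algebra isomorphism and an inner-product isometry sending $\mathbf{t}$ to $\mathbf{t}_{M_n}$, so that both orthogonality and the factor $\mathbf{t}_{M_n}^{\,j}$ are preserved. Once this bookkeeping is verified, the decomposition transfers automatically.
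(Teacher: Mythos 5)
Your proposal is correct and follows exactly the route the paper intends: the paper itself deduces Proposition~\ref{matrixo} from Proposition~\ref{realo} via the identification~\eqref{equo-dos}, citing~\eqref{okko} and~\eqref{t=t}, and you have simply made explicit the three properties (multiplicativity, isometry for $\langle\cdot,\cdot\rangle_{\mathcal P}$ by the definition~\eqref{innerB}, and $\Phi(\mathbf{t})=\mathbf{t}_{M_n}$) that make the transfer work. No gaps.
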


We need two more lemmas.

\begin{lemma} \label{ludo100}
Let $f \in  \mathfrak{H}(M_n)$ and $U \in \mathcal{U}_n$. Then $f \circ L_U \in  \mathfrak{H}(M_n)$.
Moreover, if $f \in \mathfrak{H}_k(M_n)$, then also $f \circ L_U \in  \mathfrak{H}_k(M_n)$.
\end{lemma}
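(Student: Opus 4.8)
The plan is to reduce everything to the classical invariance of the Laplace operator under orthogonal transformations, transported through the identification $\mathbf{I}_n\colon M_n \to \mathbb{R}^{2n^2}$. The crucial observation is that, although $L_U$ is merely a complex-linear map of $M_n$, under $\mathbf{I}_n$ it becomes an \emph{orthogonal} transformation of $\mathbb{R}^{2n^2}$ precisely because $U$ is unitary. Once this is established, preservation of harmonicity (and of homogeneity) is automatic.

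First I would record that the Euclidean norm on $\mathbb{R}^{2n^2}$, pulled back through $\mathbf{I}_n$, is exactly the Hilbert--Schmidt norm: for $Z=(x_{k\ell}+iy_{k\ell})\in M_n$,
\[
\|\mathbf{I}_n Z\|_2^2=\sum_{k,\ell}(x_{k\ell}^2+y_{k\ell}^2)=\sum_{k,\ell}|z_{k\ell}|^2=\mathrm{tr}(ZZ^\ast)=\mathbf{t}_{M_n}(Z).
\]
Since $U\in\mathcal U_n$ is unitary, $\mathrm{tr}\big((UZ)(UZ)^\ast\big)=\mathrm{tr}(UZZ^\ast U^\ast)=\mathrm{tr}(ZZ^\ast)$, so $L_U$ preserves $\mathbf{t}_{M_n}$. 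Hence the real-linear map $\widetilde{L}_U:=\mathbf{I}_n\circ L_U\circ\mathbf{I}_n^{-1}\colon\mathbb{R}^{2n^2}\to\mathbb{R}^{2n^2}$ preserves the Euclidean norm, i.e. it is an orthogonal transformation of $\mathbb{R}^{2n^2}$.

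Next I would invoke the standard fact that precomposition with an orthogonal transformation commutes with the Laplacian: if $R=(r_{ij})$ is orthogonal, then $\Delta(g\circ R)=(\Delta g)\circ R$ for every $g\in\mathcal P(\mathbb{R}^{2n^2})$, a one-line chain-rule computation using $\sum_i r_{ji}r_{ki}=\delta_{jk}$. In particular $g\circ\widetilde{L}_U\in\mathcal H(\mathbb{R}^{2n^2})$ whenever $g\in\mathcal H(\mathbb{R}^{2n^2})$. Now write $f=g\circ\mathbf{I}_n$ with $g:=f\circ\mathbf{I}_n^{-1}\in\mathcal H(\mathbb{R}^{2n^2})$; using the intertwining $\mathbf{I}_n\circ L_U=\widetilde{L}_U\circ\mathbf{I}_n$ gives
\[
f\circ L_U=(g\circ\mathbf{I}_n)\circ L_U=(g\circ\widetilde{L}_U)\circ\mathbf{I}_n,
\]
and since $g\circ\widetilde{L}_U$ is harmonic, the definition \eqref{okko} of $\mathfrak H(M_n)$ yields $f\circ L_U\in\mathfrak H(M_n)$.

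Finally, for the homogeneity claim I would simply apply Lemma~\ref{ludo1000}: if $f\in\mathfrak H_k(M_n)\subset\mathfrak P_k(M_n)$, then for all $\lambda\in\mathbb{R}$ and $Z\in M_n$ we have $(f\circ L_U)(\lambda Z)=f(\lambda UZ)=\lambda^k f(UZ)=\lambda^k(f\circ L_U)(Z)$, so $f\circ L_U\in\mathfrak P_k(M_n)$; combined with the previous paragraph this gives $f\circ L_U\in\mathfrak H_k(M_n)$. The only genuinely substantive point is the identification of $\widetilde L_U$ as an orthogonal map, which is exactly where unitarity of $U$ enters; everything else is formal. (The identical argument applies to the right multiplication $R_U$, since $\mathrm{tr}\big((ZU)(ZU)^\ast\big)=\mathrm{tr}(ZZ^\ast)$ as well.)
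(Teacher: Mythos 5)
Your proof is correct and takes essentially the same route as the paper's: both reduce the claim to the fact that $L_U$ acts as a linear isometry of the underlying Euclidean space together with the invariance of the Laplacian under such isometries --- the paper realizes $L_U$ as the unitary $\Phi_{U\otimes \mathrm{id}_{\mathbb{C}^n}}$ on $\mathbb{C}^{n^2}$, while you realize it as an orthogonal map of $\mathbb{R}^{2n^2}$ via preservation of the Hilbert--Schmidt norm $\mathrm{tr}(ZZ^\ast)$, which is only a cosmetic difference. The homogeneity statement is handled identically via Lemma~\ref{ludo1000}.
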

\begin{proof}
Recall  the well-known fact
that for every harmonic function $F\colon \mathbb{C}^{n^2} \to \mathbb{C}$ and every $W \in \mathcal{U}_{n^2}$,
we have that $\triangle (F \circ \Phi_W ) = \triangle F \circ \Phi_W$,  where $\Phi_W z = Wz$  for
$z \in \mathbb{C}^{n^2}$. Now identify $M_n$ and $\mathbb{C}^{n^2}$ in the natural way by
\begin{equation}\label{idddo}
\mathbf{J}_n(Z) = \big(z_{11}, \ldots, z_{1n}, \, z_{21} \ldots, z_{2n}, \cdots, z_{n1}, \ldots, z_{nn}\big), \quad Z \in M_n\,,
\end{equation}
and define $g = f \circ \mathbf{J}^{-1}_n: \mathbb{C}^{n^2} \to \mathbb{C} $. Then obviously $\triangle g = 0$, and  moreover
a~simple calculation shows
\[
f \circ L_U = g \circ \Phi_{U \otimes \text{id}_{\mathbb{C}^n}} \circ \mathbf{J}_{n}\,.
\]
Since $U \otimes \text{id}_{\mathbb{C}^n} \in \mathcal{U}_{n^2}$ is unitary, it follows that
\begin{equation*}
\triangle(f \circ L_U) =  \triangle(g \circ \Phi_{U \otimes \text{id}_{\mathbb{C}^n}})
= \triangle g \circ \Phi_{U \otimes \text{id}_{\mathbb{C}^n}} =0\,.
\end{equation*}
For the second statement note that if $f \in \mathfrak{H}_k(M_n)$, then by the first statement $f \circ L_U \in \mathfrak{H}(M_n)$.
But $f \circ L_U (\lambda Z) = \lambda f \circ L_U (Z) $  for all $Z \in M_n$ and $\lambda \in \mathbb{R}$, and hence the claim
follows from Lemma~\ref{ludo1000}. 
\end{proof}

For $p,q\in\mathbb N_0$ let $\mathfrak H_{(p,q)}(M_n)\subset \mathfrak{H}(M_n)$ be the subspace of all harmonic polynomials,
which are $p$-homogeneous in $Z=(z_{ij})$ and $q$-homogeneous in $\overline{Z}=(\overline{z}_{ij})$, that is, all polynomials
$f \in \mathfrak{H}(M_n)$ of the form
\begin{align} \label{representation}
f\left(Z\right)=\sum_{
|\alpha|=p, |\beta|=q} c_{(\alpha,\beta)}\,\,Z^\alpha\overline{Z}^\beta\,, \quad Z \in M_{n}.
\end{align}
By Lemma~\ref{ludo1000} we immediately see that
\begin{equation}\label{mittag}
\mathfrak H_{(p,q)}(M_n)\subset \mathfrak{H}_{p+q}(M_n)\,.
\end{equation}

The following result is crucial for our purpose
(see also Lemma~\ref{ludo200}).

\begin{lemma} \label{ludo13}
For all  $f \in \mathfrak H_{(p,q)}(M_n)$ and $U \in \mathcal{U}_n$ one has
\begin{equation*} \label{AAA}
f \circ L_U \in \mathfrak H_{(p,q)}(M_n)\qquad\text{ and }\qquad f\circ R_U  \in \mathfrak H_{(p,q)}(M_n)\,.
\end{equation*}
Moreover,
  \begin{equation}\label{denso4}
  \mathfrak H (M_n)=\text{span}_{p,q}\mathfrak H_{(p,q)}(M_n)\,.
\end{equation}
\end{lemma}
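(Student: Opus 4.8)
The plan is to reduce the first assertion to a complex-scaling version of Lemma~\ref{ludo1000}, and to combine it with the harmonicity already established for $L_U$ in Lemma~\ref{ludo100}; the density statement~\eqref{denso4} then follows by a bi-graded repetition of the argument used for~\eqref{ludo1}.

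The first thing I would record is the right homogeneity criterion. Using the uniqueness of the representation~\eqref{representationA}, a polynomial $g \in \mathfrak{P}(M_n)$ is $(p,q)$-bihomogeneous (that is, of the form~\eqref{representation}, harmonicity aside) if and only if $g(\lambda Z) = \lambda^p \overline{\lambda}^q\, g(Z)$ for all $\lambda \in \mathbb{C}$ and $Z \in M_n$. Indeed, writing $g(Z)=\sum_{(\alpha,\beta)} c_{(\alpha,\beta)} Z^\alpha \overline{Z}^\beta$ and comparing coefficients—which is legitimate by uniqueness—the scaling relation forces $\lambda^{|\alpha|}\overline{\lambda}^{|\beta|} = \lambda^p\overline{\lambda}^q$ for every $(\alpha,\beta)$ with $c_{(\alpha,\beta)}\neq 0$, hence $|\alpha|=p$ and $|\beta|=q$. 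This is the exact complex analog of Lemma~\ref{ludo1000}.

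Granting this criterion, invariance under $L_U$ is immediate: for $f\in\mathfrak{H}_{(p,q)}(M_n)$ one has $(f\circ L_U)(\lambda Z)=f(\lambda UZ)=\lambda^p\overline{\lambda}^q (f\circ L_U)(Z)$, so $f\circ L_U$ is $(p,q)$-bihomogeneous, while Lemma~\ref{ludo100} already gives $f\circ L_U\in\mathfrak{H}(M_n)$; together these yield $f\circ L_U\in\mathfrak{H}_{(p,q)}(M_n)$. The same one-line scaling computation applies verbatim to $R_U$, so the only genuinely new ingredient—and the main obstacle—is that Lemma~\ref{ludo100} preserves harmonicity only for $L_U$. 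I would dispose of this by noting that right multiplication $Z\mapsto ZU$ is a $\mathbb{C}$-linear isometry of $M_n$ for the Hilbert–Schmidt inner product $\langle A,B\rangle=\mathrm{tr}(AB^\ast)$, since $\mathrm{tr}\big(ZU(WU)^\ast\big)=\mathrm{tr}(ZW^\ast)$; as $\mathbf{J}_n$ carries this inner product to the standard one on $\mathbb{C}^{n^2}$, right multiplication corresponds to some $W\in\mathcal{U}_{n^2}$ acting by $\Phi_W$, and the fact quoted at the start of Lemma~\ref{ludo100} then gives $\Delta(f\circ R_U)=0$. (Alternatively, one writes $R_U=\tau\circ L_{U^T}\circ\tau$ with $\tau$ the transpose map, noting $U^T\in\mathcal{U}_n$ and that $\tau$ merely permutes the index pairs $(i,j)$, hence commutes with $\Delta$ and preserves bi-degree.)

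For the density statement~\eqref{denso4} I would invoke~\eqref{denso5}, which reduces the claim to showing that every $k$-homogeneous harmonic polynomial is a sum of harmonic bihomogeneous pieces. Given $f\in\mathfrak{H}_k(M_n)$, decompose $f=\sum_{p+q=k} f_{(p,q)}$ into its bihomogeneous components, well defined by uniqueness of~\eqref{representationA}. Since each summand of $\Delta=\sum_{i,j}\partial_{z_{ij}}\partial_{\overline{z}_{ij}}$ lowers both the $Z$- and the $\overline{Z}$-degree by one, $\Delta f_{(p,q)}$ is bihomogeneous of type $(p-1,q-1)$; as these types are pairwise distinct for distinct $(p,q)$ with $p+q=k$, the identity $\sum_{p+q=k}\Delta f_{(p,q)}=\Delta f=0$ forces $\Delta f_{(p,q)}=0$ for each pair, by disjointness of the monomial supports. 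Hence every $f_{(p,q)}\in\mathfrak{H}_{(p,q)}(M_n)$ and $f\in\text{span}_{p,q}\mathfrak{H}_{(p,q)}(M_n)$, which combined with~\eqref{denso5} gives the asserted equality.
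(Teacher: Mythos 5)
Your proof is correct, and while it follows the same overall skeleton as the paper (bidegree preservation via a homogeneity argument plus harmonicity preservation via unitarity, then reduction of the density claim to a bigraded decomposition of $\mathfrak H_k(M_n)$), the two key mechanisms you use are genuinely different. For the bidegree part, the paper works only with real scaling: it expands $(UZ)^\alpha(\overline{UZ})^\beta$ by the multinomial formula to see that the resulting monomials satisfy $|\eta|\le p$, $|\sigma|\le q$, then invokes Lemma~\ref{ludo100} and \eqref{mittag} to get real $(p+q)$-homogeneity, and concludes $|\eta|=p$, $|\sigma|=q$ by combining the equality $|\eta|+|\sigma|=p+q$ with the two inequalities. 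Your complex-scaling criterion $g(\lambda Z)=\lambda^p\overline{\lambda}^q g(Z)$ (justified correctly from the uniqueness of \eqref{representationA}) replaces all of this bookkeeping with a one-line computation and does not need harmonicity to pin down the bidegree; it is the cleaner tool, and it is the exact complex analogue of Lemma~\ref{ludo1000} that the paper chose not to isolate. For $f\circ R_U$, the paper reduces to the $L_U$ case via the conjugation trick $f^\ast(Z):=f(Z^\ast)$, which swaps $\mathfrak H_{(p,q)}$ and $\mathfrak H_{(q,p)}$, whereas you observe directly that $Z\mapsto ZU$ is a unitary of $M_n$ for the Hilbert--Schmidt inner product and apply the same harmonicity-preservation fact already quoted in Lemma~\ref{ludo100}; your transpose factorization $R_U=\tau\circ L_{U^T}\circ\tau$ is a third valid route. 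Finally, for \eqref{denso4} you actually prove the identity $\mathfrak H_k(M_n)=\operatorname{span}_{p+q=k}\mathfrak H_{(p,q)}(M_n)$ that the paper only asserts as ``easily seen'' with a reference to Rudin, and your argument (distinct bidegrees of the pieces $\Delta f_{(p,q)}$ force each to vanish) is exactly the right one. All steps check out.
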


\begin{proof}
Taking for $f$ a~representation as in \eqref{representation}, we have
\[
f \circ L_U (Z) =\sum_{|\alpha|=p, |\beta|=q}c_{(\alpha,\beta)}\,\,(UZ)^\alpha (\overline{UZ})^\beta\,, \quad Z \in M_n.
\]
Now for each $1 \leq i,j \leq n$ we use the multinomial formula for $(\sum_{\ell} u_{i\ell}z_{\ell j}) ^{\alpha_{i j}}$,
to get
\[
(UZ)^\alpha (\overline{UZ})^\beta = \sum_{|\gamma|\leq p, |\zeta|\leq q }
d_{(\gamma,\delta)}\,\, Z^\gamma \overline{Z}^\zeta\,, \quad Z = (z_{k\ell})_{k,\ell}\in M_n\,.
\]
Combining, we conclude that $f \circ L_U$ has a~representation
\[
f \circ L_U (Z) =\sum_{|\eta| \leq p, |\sigma|\leq q }
e_{(\eta,\sigma)}\,\,Z^\eta \overline{Z}^\sigma\,, \quad Z \in M_n.
\]

On the other hand, 
by ~\eqref{mittag} and Lemma~\ref{ludo100}, it follows that $f \circ L_U \in \mathfrak{H}_{p+q}(M_n)$, and hence 
for all $\lambda \in \mathbb{R}$ and $Z \in M_n$ one has
\begin{align*}
\sum_{|\eta| \leq p, |\sigma|\leq q } e_{(\eta,\sigma)}\,\lambda^{|\eta|+|\sigma|} \,Z^\eta \overline{Z}^\sigma
= (f \circ L_U)(\lambda Z) = \lambda^{p+q}(f \circ L_U)( Z) = \sum_{|\eta| \leq p, |\sigma|\leq q}
c_{(\eta,\sigma)}\,\lambda^{p+q} \,Z^\eta \overline{Z}^\sigma\,.
\end{align*}
Inserting 
$Z = \id \in M_n$,
shows that $e_{(\eta,\sigma)} \neq 0$  only if  $|\eta| + |\sigma|= p+q$, and since 
$|\eta| \leq p$ and $|\sigma| \leq q$, this is only possible whenever $|\eta| = p$ and $|\sigma| = q$. This as desired proves $f \circ L_U \in \mathfrak H_{(p,q)}(M_n)$.

The equality    \eqref{denso4} 
 follows from \eqref{denso5},
since it may  easily be seen that 
$\mathfrak H_k (M_n)=\text{span}_{p+q=k}\mathfrak H_{(p,q)}(M_n)$
(see also, e.g., \cite[Proposition 12.2.2]{rudin1980}).

In order to prove  that $f \circ R_U \in  \mathfrak H_{(p,q)} (M_n)$, define $f^\ast(Z)= f (Z^\ast)$ for $Z \in M_n$.
Since the mapping $\ell_{2}^{n^2} \to \ell_{2}^{n^2},  Z \mapsto Z^\ast$ is unitary (it is an isometry), the function  $f^\ast$ is harmonic. Now,  looking at the representation of $f$ as in \eqref{representation}, we see  that $f^\ast \in \mathfrak{H}_{(q,p)} (M_n)$. This, by what is already proved, gives that  $f^\ast \circ L_{U^\ast}\in \mathfrak{H}_{(q,p)} (M_n)$.
But  for $Z \in M_n$
\begin{align*}
  f \circ R_U (Z) = f(ZU)
  = f((U^\ast Z^\ast)^\ast) = f^\ast(U^\ast Z^\ast)= f^\ast \circ L_{U^\ast} (Z^\ast) =
  (f^\ast \circ L_{U^\ast})^\ast(Z)\,,
\end{align*}
and hence $f \circ R_U  = (f^\ast \circ L_{U^\ast})^\ast \in \mathfrak H_{(p,q)} (M_n)$.
\end{proof}

\subsection{Unitarily invariant subspaces of $C(\mathcal{U}_n)$}
\label{Unitarily invariant subspaces}

By $\mathfrak P (\mathcal U_n)$ and $\mathfrak P_k (\mathcal U_n)$ we denote the linear space of all restrictions
$f|_{\mathcal{U}_n}: \mathcal U_n \to \mathbb{C}$ of polynomials
$f \in \mathfrak P( M_n)$ and $f \in \mathfrak P_k( M_n)$, respectively.

Similarly, we for all restrictions to $\mathcal U_n$ of harmonic polynomials from $\mathfrak H (M_n)$  and $\mathfrak H_k (M_n)$ write
$
\mathfrak H (\mathcal U_n)
$
and
$
\mathfrak H_k (\mathcal U_n)\,,
$
respectively, and the elements therein we address as  unitary harmonics.
All this  constitutes
important subspaces of $C(\mathcal{U}_n)$.

\begin{lemma} \label{ludo300}
For each $k$
\begin{equation}\label{forgot}
\mathfrak P_k (\mathcal U_n)=\text{span}_{\ell\leq k} \,\mathfrak{H}_\ell (\mathcal U_n)\,,
\end{equation}
and 
\begin{align} \label{denso1000}
    \mathfrak P (\mathcal U_n)=\text{span}_k\mathfrak{P}_k (\mathcal U_n)
    =
    \text{span}_\ell\mathfrak{H}_\ell (\mathcal U_n) = \mathfrak H (\mathcal U_n)\,.
    \end{align}
\end{lemma}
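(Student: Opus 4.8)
The engine of the whole statement is the elementary observation that the distinguished polynomial $\mathbf t_{M_n}$ is \emph{constant} on the unitary group: for every $U \in \mathcal U_n$ one has
\[
\mathbf t_{M_n}(U) = \mathrm{tr}(UU^\ast) = \mathrm{tr}(\id) = n\,.
\]
I would isolate this remark first, since it is exactly what makes the harmonic decomposition of Proposition~\ref{matrixo} collapse once it is restricted to $\mathcal U_n$.

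To prove \eqref{forgot}, the plan is to restrict the orthogonal decomposition
\[
\mathfrak P_k(M_n)=\mathfrak H_k(M_n)\oplus \mathbf t_{M_n}\cdot \mathfrak H_{k-2}(M_n)\oplus \mathbf t_{M_n}^2\cdot \mathfrak H_{k-4}(M_n)\oplus\cdots
\]
of Proposition~\ref{matrixo} to $\mathcal U_n$. By the remark above, the $j$-th summand $\mathbf t_{M_n}^j\cdot\mathfrak H_{k-2j}(M_n)$ restricts to $n^j\,\mathfrak H_{k-2j}(\mathcal U_n)=\mathfrak H_{k-2j}(\mathcal U_n)$; conversely any $h|_{\mathcal U_n}\in\mathfrak H_{k-2j}(\mathcal U_n)$ equals $n^{-j}\big(\mathbf t_{M_n}^j h\big)|_{\mathcal U_n}$ and so lies in the restriction of that summand. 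Since $\mathfrak P_k(\mathcal U_n)$ is by definition the image of $\mathfrak P_k(M_n)$ under restriction, and restriction is linear, the restricted decomposition gives
\[
\mathfrak P_k(\mathcal U_n)=\text{span}\big\{\mathfrak H_\ell(\mathcal U_n):\ell\le k\big\}\,,
\]
which is \eqref{forgot}. Two points deserve a word of care: the sum becomes a \emph{span} rather than an orthogonal direct sum, because restriction to $\mathcal U_n$ is no longer injective (e.g. the constant $1$ sits in every even level), and the harmonic levels that actually contribute are $\ell=k,k-2,k-4,\dots$.

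Finally I would assemble \eqref{denso1000}. Restriction to $\mathcal U_n$ is linear, maps each space onto its space of restrictions by definition, and commutes with forming spans; hence restricting \eqref{denso1} yields $\mathfrak P(\mathcal U_n)=\text{span}_k\mathfrak P_k(\mathcal U_n)$, and restricting \eqref{denso5} yields $\text{span}_\ell\mathfrak H_\ell(\mathcal U_n)=\mathfrak H(\mathcal U_n)$. Substituting \eqref{forgot} into the first identity,
\[
\text{span}_k\mathfrak P_k(\mathcal U_n)=\text{span}_k\,\text{span}_{\ell\le k}\mathfrak H_\ell(\mathcal U_n)=\text{span}_\ell\mathfrak H_\ell(\mathcal U_n)\,,
\]
the last step being valid because each level $\ell$ already appears for the choice $k=\ell$ (so the earlier parity refinement is irrelevant here). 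Chaining these three identities produces \eqref{denso1000}.

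The only genuine idea is the collapse $\mathbf t_{M_n}|_{\mathcal U_n}\equiv n$; everything else is bookkeeping with restriction maps and spans. Accordingly, the main (mild) obstacle is simply to keep track of the fact that restriction to $\mathcal U_n$ destroys the orthogonality supplied by Proposition~\ref{matrixo}, so that one argues throughout with spans and does not expect a direct-sum decomposition of $\mathfrak P_k(\mathcal U_n)$.
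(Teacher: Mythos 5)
Your proof is correct and follows essentially the same route as the paper: restrict the decomposition of Proposition~\ref{matrixo} to $\mathcal U_n$, use that $\mathbf t_{M_n}\equiv n$ there to obtain \eqref{forgot}, and then chain \eqref{denso1}, \eqref{forgot} and \eqref{denso5} to get \eqref{denso1000}. Your additional remarks (the loss of orthogonality under restriction, and the fact that only the levels $\ell\equiv k\pmod 2$ actually arise in \eqref{forgot}) are careful refinements that the paper leaves implicit.
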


\begin{proof}
Proposition~\ref{matrixo} 
and the fact that  the function $\mathbf 
t_{M_n} =n$ on $\mathcal U_n$ imply
~\eqref{forgot}. To prove   
~\eqref{denso1000}
note that  the first equality is a consequence of ~\eqref{denso1}, the second one of~\eqref{forgot}, and the last one 
    of~\eqref{denso5}. 
\end{proof}

Moreover, for $p,q\in\mathbb N_0$, we write $\mathfrak H_{(p,q)}(\mathcal U_n)$
for all  restrictions to $\mathcal U_n$ of functions in $\mathfrak H_{(p,q)}(M_n)$. Observe that  a 
function 
$f:\mathcal U_n  \to \mathbb C$
belongs to $\mathfrak H_{(p,q)}(\mathcal U_n)$
if and only if it has on $\mathcal U_n$ a representation like in~
\eqref{representation}.
All needed information on these subspaces 
of $C(\mathcal{U}_n)$ is included in the following lemma, which is an immediate consequence of  Lemma~\ref{ludo13}.    
\begin{lemma} \label{ludo200}
Each  space  $\mathfrak H_{(p,q)} (\mathcal U_n)$
is a $\mathcal U_n$-invariant subspace of $C(\mathcal U_n)$, that is,
for all $f \in \mathfrak H_{(p,q)} (\mathcal U_n)$
and $U \in \mathcal{U}_n$, we have  $f \circ L_U, f \circ R_U  \in \mathfrak H_{(p,q)} (\mathcal U_n)$. Moreover,
\begin{equation}\label{denso50}
  \mathfrak H (\mathcal U_n)=\text{span}_{p,q}\mathfrak H_{(p,q)}(\mathcal U_n) \,.
  \end{equation}
\end{lemma}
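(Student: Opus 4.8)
The plan is to push everything established at the level of the full matrix space $M_n$ in Lemma~\ref{ludo13} down to the unitary group $\mathcal U_n$ by restriction. Recall that, by definition, $\mathfrak H_{(p,q)}(\mathcal U_n)$ consists exactly of the restrictions $\tilde f|_{\mathcal U_n}$ of functions $\tilde f \in \mathfrak H_{(p,q)}(M_n)$, and likewise $\mathfrak H(\mathcal U_n)$ is the restriction of $\mathfrak H(M_n)$. So the entire content of the lemma should follow by applying the restriction map to the two conclusions of Lemma~\ref{ludo13}.

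First I would prove the invariance statement. The one observation to make explicit is the elementary fact that, for every $U \in \mathcal U_n$, the maps $L_U$ and $R_U$ send $\mathcal U_n$ into itself, since the product of two unitary matrices is again unitary. Granting this, take any $f = \tilde f|_{\mathcal U_n} \in \mathfrak H_{(p,q)}(\mathcal U_n)$ with $\tilde f \in \mathfrak H_{(p,q)}(M_n)$. Then for every $V \in \mathcal U_n$ one has $(f \circ L_U)(V) = \tilde f(UV) = (\tilde f \circ L_U)(V)$, so that $f \circ L_U = (\tilde f \circ L_U)|_{\mathcal U_n}$. Lemma~\ref{ludo13} gives $\tilde f \circ L_U \in \mathfrak H_{(p,q)}(M_n)$, and hence its restriction $f \circ L_U$ lies in $\mathfrak H_{(p,q)}(\mathcal U_n)$, as required. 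The identical argument, with $R_U$ replacing $L_U$ and $UV$ replaced by $VU$, handles the right multiplication.

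For the decomposition~\eqref{denso50} I would simply apply the linear restriction map $g \mapsto g|_{\mathcal U_n}$ to the identity~\eqref{denso4}. Since restriction commutes with taking finite linear combinations, and since $\mathfrak H(\mathcal U_n)$ and each $\mathfrak H_{(p,q)}(\mathcal U_n)$ are by definition the images under restriction of $\mathfrak H(M_n)$ and $\mathfrak H_{(p,q)}(M_n)$, both inclusions defining the span equality transfer directly, giving~\eqref{denso50}. The sole place where unitarity of $U$ enters is the observation that $L_U$ and $R_U$ preserve $\mathcal U_n$ — without it the relevant restrictions would not even be well defined — but this is immediate, which is precisely why the lemma is an immediate consequence of Lemma~\ref{ludo13} and presents no genuine obstacle.
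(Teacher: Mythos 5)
Your proposal is correct and is exactly the argument the paper intends: the authors state the lemma as an immediate consequence of Lemma~\ref{ludo13}, and your write-up simply makes explicit the restriction step (that $L_U$, $R_U$ preserve $\mathcal U_n$, that restriction commutes with composition, and that restriction is linear so it carries \eqref{denso4} to \eqref{denso50}). No gaps.
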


We finish with  the following density result being  crucial for our purposes.

\begin{theorem} \label{den-matrix}
$\mathfrak H (\mathcal U_n)$ is dense in $C(\mathcal U_n)$. In particular, the  span of the union of 
all $\mathfrak H_{k}(\mathcal U_n)$ as well as the span of the union of all $\mathfrak H_{(p,q)}(\mathcal U_n)$ are dense  in $C(\mathcal U_n)$.
\end{theorem}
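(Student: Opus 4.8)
The plan is to prove density of $\mathfrak{H}(\mathcal{U}_n)$ in $C(\mathcal{U}_n)$ by invoking the Stone--Weierstrass theorem, taking advantage of the fact that $\mathfrak{H}(\mathcal{U}_n) = \mathfrak{P}(\mathcal{U}_n)$ by \eqref{denso1000} in Lemma~\ref{ludo300}. Since Stone--Weierstrass for the \emph{complex} algebra $C(\mathcal{U}_n)$ requires a subalgebra that separates points, contains the constants, and is closed under complex conjugation, I would first verify that $\mathfrak{P}(\mathcal{U}_n)$ enjoys all four properties. That $\mathfrak{P}(\mathcal{U}_n)$ is a subalgebra is immediate from the representation \eqref{representationA}: products of polynomials in the entries $z_{ij}$ and $\overline{z}_{ij}$ are again such polynomials. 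It contains the constants (take $\alpha = \beta = 0$), and it is self-adjoint because the complex conjugate of $Z^\alpha \overline{Z}^\beta$ is $\overline{Z}^\alpha Z^\beta$, which is again of the admissible form in \eqref{representationA}.

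The one substantive point is separation of points, and this is where I expect the only real (though mild) obstacle to lie. Given two distinct unitaries $U \neq V$ in $\mathcal{U}_n$, they must differ in at least one entry, say $u_{ij} \neq v_{ij}$. Then the coordinate function $Z \mapsto z_{ij}$, which belongs to $\mathfrak{P}(\mathcal{U}_n)$ (it is a $1$-homogeneous monomial, hence an element of $\mathfrak{P}_1(M_n)$ restricted to $\mathcal{U}_n$), already separates $U$ and $V$. Thus point separation is essentially automatic once one recalls that the individual matrix entries are themselves admissible polynomials. With all four hypotheses in hand, the complex Stone--Weierstrass theorem yields that $\mathfrak{P}(\mathcal{U}_n)$ is dense in $C(\mathcal{U}_n)$, and by the chain of equalities in \eqref{denso1000} this is the same as density of $\mathfrak{H}(\mathcal{U}_n)$.

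For the ``in particular'' clauses, I would simply unwind the definitions already established. The span of the union of all $\mathfrak{H}_k(\mathcal{U}_n)$ equals $\mathfrak{H}(\mathcal{U}_n)$ by the second equality in \eqref{denso1000}, so its density is exactly the statement just proved. Likewise, the span of the union of all $\mathfrak{H}_{(p,q)}(\mathcal{U}_n)$ equals $\mathfrak{H}(\mathcal{U}_n)$ by \eqref{denso50} in Lemma~\ref{ludo200}, and hence is dense as well. In short, after the main density statement is secured via Stone--Weierstrass, the two refinements are purely formal consequences of the decomposition results collected earlier in this section.

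I expect the whole argument to be short: the genuine content is recognizing that $\mathfrak{H}(\mathcal{U}_n)$ coincides with the \emph{full} polynomial algebra $\mathfrak{P}(\mathcal{U}_n)$ on the unitary group (the nontrivial input being that $\mathbf{t}_{M_n} = n$ is constant on $\mathcal{U}_n$, which collapses the harmonic decomposition of Proposition~\ref{matrixo} onto the harmonics, as already used in Lemma~\ref{ludo300}), after which Stone--Weierstrass finishes the job. No delicate estimate or approximation-theoretic construction is needed; the harmonic structure has already done the heavy lifting in the preceding lemmas.
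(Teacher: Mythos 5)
Your proposal is correct and follows essentially the same route as the paper: apply the complex Stone--Weierstrass theorem to the conjugation-closed subalgebra $\mathfrak{P}(\mathcal U_n)$, with the coordinate functions separating points, and then invoke the equalities \eqref{denso1000} and \eqref{denso50} to transfer density to $\mathfrak H(\mathcal U_n)$ and the spans of the graded pieces. The only difference is that you spell out the verification of the Stone--Weierstrass hypotheses in more detail than the paper does.
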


\begin{proof}
Observe first that $\mathfrak{P} (\mathcal U_n)$ is a~subalgebra of $C(\mathcal U_n)$, which is  closed under conjugation, 
and that the collection of all coordinate functions $e_{ij}$ separates the points of $\mathcal U_n$. Thus, by the 
Stone-Weierstrass theorem, $\mathfrak{P}(\mathcal U_n)$ is dense in $C (\mathcal U_n)$. The rest follows from~\eqref{denso1000}
and~\eqref{denso50}.
\end{proof}

\begin{remark}
An important difference between spherical harmonics and unitary harmonics  is that for the case of the sphere the corresponding 
spaces $\mathfrak{H}_{(p,q)}(\mathbb{S}^{\mathbb{C}}_n)$ are mutually orthogonal in $L_2(\mathbb{S}^{\mathbb{C}}_n)$ 
(see \cite[Theorem 12.2.3]{rudin1980}). But for the subspaces $\mathfrak{H}_{(p,q)}(\mathcal{U}_n)$ of $L^2(\mathcal{U}_n)$ this 
is no longer true. To see an example take $f\in \mathfrak{H}_{(1,0)}(\mathcal{U}_n)$ and
$g\in \mathfrak{H}_{(2,1)}(\mathcal{U}_n)$ defined by
$f(U)=u_{1,1}$ and $g(U)=\overline{u_{2,2}}u_{1,2}u_{2,1}$. Then (see, e.g., \cite[Section 4.2]{hiai2000semicircle})
\begin{equation}\label{no-ortho}
  \big\langle f,g\big\rangle_{L_2}=\int_{\mathcal U_n}u_{1,1}u_{2,2}\overline{u_{1,2}u_{2,1}}dU=-\frac1{(n-1)n(n+1)}\,.
\end{equation}
\end{remark}

On the other hand, using basic properties of the Haar measure  on $\mathcal U_n$, it is not difficult to prove that

\begin{equation} \label{otto}
\text{ $\mathfrak{H}_{(p,q)}(\mathcal{U}_n)\perp \mathfrak{H}_{(p',q')}(\mathcal{U}_n)$ \,\,\,\,
whenever \,\,\,\, $p+q=p'+q'$ \,\,\,\, and \,\,\,\, $(p,q) \neq (p',q')$}
\end{equation}
(see \cite[\S 29]{hewitt2013abstract}, or \cite{kostenberger2021weingarten}).

It is worth noting the following conclusion from \eqref{otto} (not needed for our further purposes), which states that
\[
\mathfrak H_k(\mathcal{U}_n)= \mathfrak H_{(k,0)}(\mathcal{U}_n)\, \oplus\, \mathfrak H_{(k-1,1)}(\mathcal{U}_n)
\oplus\dots \, \oplus\, \mathfrak H_{(0,k)}(\mathcal{U}_n),\, 
\]
where $\oplus$ indicates the orthogonal sum in $L_2(\mathcal U_n)$.
We conclude with the observation  that in contrast to \eqref{no-ortho} we have $\big\langle f,g\big\rangle_{\mathfrak P}= 0$, 
so the Euclidean structure, which $\mathfrak{H}_{(p,q)}(\mathcal{U}_n)$ inherits from $L_2(\mathcal{U}_n)$, is different 
from that induced by the inner product from \eqref{innerB}.

\section{Projection constants} \label{Projection constants}
As explained in the introduction the main goal of this work is to prove the following  result.

\begin{theorem}
\label{mainU}
For each $n \in \mathbb{N}$,
\begin{equation}\label{intgralformula}
   \boldsymbol{\lambda}\big(\mathcal S_1(n)\big) = \big\|\pi_{(1,0)}:C(\mathcal U_n) \to \mathcal S_1(n)\big\| = n \int_{\mathcal{U}_n }|\mathrm{tr} (V)| d V\,.
   \end{equation}
Moreover,
\begin{equation}\label{cor: lim proj constant trace}
  \lim_{n\to \infty} \, \frac{\boldsymbol{\lambda}\big(\mathcal S_1(n)\big)}{n}=\frac{\sqrt \pi}{2}\,.
\end{equation}
\end{theorem}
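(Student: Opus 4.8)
The plan is to make the isometric embedding $\mathcal{S}_1(n)\hookrightarrow C(\mathcal{U}_n)$ fully explicit and then run Rudin's averaging technique with the two-sided translation group. Via the trace duality \eqref{dualitytr} followed by restriction to $\mathcal{U}_n$, an operator $u\in\mathcal{S}_1(n)$ is sent to the function $V\mapsto\mathrm{tr}(uV)=\sum_{i,j}u_{ji}v_{ij}$; as $u$ runs through $M_n$ these are exactly the functions that are $1$-homogeneous in the entries $v_{ij}$ and constant in the $\overline{v_{ij}}$, and since every linear polynomial is harmonic, the image is precisely $\mathfrak{H}_{(1,0)}(\mathcal{U}_n)$. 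Thus the embedding identifies $\mathcal{S}_1(n)$ isometrically with $\mathfrak{H}_{(1,0)}(\mathcal{U}_n)$, and by \eqref{essentialpoint} it suffices to compute the relative constant $\boldsymbol{\lambda}\big(\mathfrak{H}_{(1,0)}(\mathcal{U}_n),C(\mathcal{U}_n)\big)$.

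For the averaging I would let $G=\mathcal{U}_n\times\mathcal{U}_n$ act on $C(\mathcal{U}_n)$ by $\big(T_{(V,W)}f\big)(U)=f(VUW)$, i.e. through the composition operators $C_{L_V}$ and $C_{R_W}$, and take as candidate the orthogonal projection $\pi_{(1,0)}$ onto $\mathfrak{H}_{(1,0)}$ restricted to $C(\mathcal{U}_n)$. Since these translations act as isometries of $C(\mathcal{U}_n)$ and as unitaries of $L_2(\mathcal{U}_n)$ preserving the finite-dimensional space $\mathfrak{H}_{(1,0)}(\mathcal{U}_n)$ (Lemma~\ref{ludo200}), $\pi_{(1,0)}$ commutes with every $T_{(V,W)}$. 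Averaging an arbitrary projection $Q:C(\mathcal{U}_n)\to\mathfrak{H}_{(1,0)}(\mathcal{U}_n)$ over the Haar measure of $G$ produces a projection $\overline Q=\int_G T_{(V,W)}^{-1}Q\,T_{(V,W)}\,d(V,W)$ that again commutes with $G$ and has $\|\overline Q\|\le\|Q\|$. The step I expect to be the main obstacle is the \emph{uniqueness}: one must show $\pi_{(1,0)}$ is the only $G$-invariant projection with range $\mathfrak{H}_{(1,0)}(\mathcal{U}_n)$, so that $\overline Q=\pi_{(1,0)}$ and hence $\pi_{(1,0)}$ is minimal. This is a Schur/Peter--Weyl irreducibility statement: under $T_{(V,W)}$ the coordinate functions transform as $u_{ij}\mapsto\sum_{k,\ell}V_{ik}W_{\ell j}u_{k\ell}$, so $\mathfrak{H}_{(1,0)}(\mathcal{U}_n)\cong\mathbb{C}^n\otimes\mathbb{C}^n$ carries the external tensor product of the standard representation of $\mathcal{U}_n$ with (the transpose of) itself, which is irreducible; note that the left action alone splits into $n$ copies, so the full product group is genuinely needed.

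It then remains to evaluate $\|\pi_{(1,0)}:C(\mathcal{U}_n)\to\mathcal{S}_1(n)\|$. By \eqref{intformA} the functions $u_{ij}$ are orthogonal in $L_2(\mathcal{U}_n)$ with $\|u_{ij}\|_{L_2}^2=1/n$, so $\pi_{(1,0)}$ is the kernel operator $\pi_{(1,0)}f(U)=n\int_{\mathcal{U}_n}f(V)\,\mathrm{tr}(UV^{\ast})\,dV$, with reproducing kernel $K(U,V)=n\,\mathrm{tr}(UV^{\ast})$. Because $\pi_{(1,0)}$ maps into $C(\mathcal{U}_n)$, its norm equals $\sup_U\int_{\mathcal{U}_n}|K(U,V)|\,dV$; replacing $V$ by $UV$ (left-invariance of Haar measure) and using cyclicity of the trace collapses $|\mathrm{tr}(UV^{\ast})|$ to $|\mathrm{tr}(V)|$ independently of $U$, which yields $\|\pi_{(1,0)}\|=n\int_{\mathcal{U}_n}|\mathrm{tr}(V)|\,dV$ and establishes \eqref{intgralformula}.

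For the limit \eqref{cor: lim proj constant trace} I would argue probabilistically, treating $\mathrm{tr}(V)$ as a random variable on $(\mathcal{U}_n,dV)$. Two facts combine. First, taking $A=I$ in \eqref{intformB} gives $\int_{\mathcal{U}_n}|\mathrm{tr}(V)|^2\,dV=1$ for every $n$; this uniform bound on second moments makes the family $\{|\mathrm{tr}(V)|\}_n$ uniformly integrable. Second, the Diaconis--Shahshahani moment computation---accessible inside the Weingarten calculus, since the mixed moments $\int_{\mathcal{U}_n}(\mathrm{tr}V)^a\,\overline{(\mathrm{tr}V)}^b\,dV$ stabilize to the Gaussian values $\delta_{ab}\,a!$ once $n$ is large---shows that $\mathrm{tr}(V)$ converges in distribution to a standard complex Gaussian $Z$ with $\mathbb{E}|Z|^2=1$. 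Uniform integrability upgrades distributional convergence to convergence of first absolute moments, so $\lim_n\int_{\mathcal{U}_n}|\mathrm{tr}(V)|\,dV=\mathbb{E}|Z|$, and a Rayleigh-density computation gives $\mathbb{E}|Z|=\sqrt{\pi}/2$. Dividing \eqref{intgralformula} by $n$ then gives the claim; the technical heart here is justifying the distributional convergence and the passage from polynomial moments to the genuine $L^1$ quantity $\mathbb{E}|\mathrm{tr}(V)|$.
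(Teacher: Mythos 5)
Your proposal follows the same overall architecture as the paper: the isometric identification of $\mathcal S_1(n)$ with $\mathfrak H_{(1,0)}(\mathcal U_n)\subset C(\mathcal U_n)$, Rudin averaging (Theorem~\ref{rudy}) over the two-sided translation action of $\mathcal U_n\times\mathcal U_n$, the reproducing-kernel computation giving the kernel $n\,\mathrm{tr}(UV^\ast)$ and hence the integral formula (this is Proposition~\ref{specialI}), and an essentially identical probabilistic argument for the limit (convergence in distribution of $\mathrm{tr}(U(n))$ to a complex Gaussian, upgraded to convergence of first absolute moments by uniform integrability from the second-moment identity \eqref{intformB}). The one step you handle by a genuinely different mechanism is the uniqueness of the commuting projection. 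The paper proves it via \emph{strong accessibility}: every class function in $\mathfrak H_{(1,0)}(\mathcal U_n)$ is a multiple of $\mathrm{tr}$ (Proposition~\ref{specialII}), and Lemma~\ref{prop 12.2.7 for L2(U)} converts this, through the convolution kernels $\mathrm t_S$ and the density of the spaces $\mathfrak H_{(p,q)}(\mathcal U_n)$ (Theorem~\ref{den-matrix}), into the statement that any commuting projection is $\pi_{(1,0)}$. You instead invoke irreducibility of $\mathbb C^n\otimes(\mathbb C^n)^\ast$ under $\mathcal U_n\times\mathcal U_n$; this is cleaner if you know Peter--Weyl, and it is morally the same fact the paper encodes in elementary language via class functions.

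Two points need shoring up. First, irreducibility of the range alone does not force uniqueness of the commuting projection: if $\mathfrak H_{(1,0)}(\mathcal U_n)$ occurred with multiplicity $\geq 2$ in $C(\mathcal U_n)$, there would be an affine family of commuting projections onto it. What you actually need is multiplicity one, i.e.\ that the isotypic component of the defining representation in the two-sided regular representation is exactly the span of its matrix coefficients, namely $\mathfrak H_{(1,0)}(\mathcal U_n)$; Peter--Weyl supplies this, and together with density of matrix coefficients in $C(\mathcal U_n)$ and Schur's lemma your argument then closes. You gesture at the multiplicity issue only for the one-sided action, so make the two-sided multiplicity-one statement explicit. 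Second, the restriction map $\mathcal L(\ell_2^n)^\ast\to C(\mathcal U_n)$ following \eqref{dualitytr} is a priori only contractive; that it is isometric requires $\sup_{\|T\|\leq 1}|\mathrm{tr}(AT)|=\sup_{U\in\mathcal U_n}|\mathrm{tr}(AU)|$, which the paper obtains from Nelson's theorem in Proposition~\ref{linco} and which you should at least justify (in finite dimensions it also follows from the polar decomposition $A=U|A|$). Neither issue is fatal, but as written both are asserted rather than proved.
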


The proof of this theorem is presented in Section~\ref{proof of the main result}. It is based on preliminary results we prove in the following, which require some
preliminary arguments.

\subsection{Rudin's averaging technique}
Given a topological group $G$   and a Banach space $Y$, we say that $G$    acts   on  $Y$ (through $T$) whenever there is a mapping
\[
T\colon G \to \mathcal{L}(Y)\,, \, \,\, \,g \mapsto T_g
\]
such that
\[
T_e = I_Y, \quad\, T_{gh} = T_g T_h, \quad\,  g, \, \, h\in G
\]
and all mappings
\begin{equation*}\label{con(i)}
 g\ni G \mapsto T_g(y) \in Y,  \quad\,   \, \, y\in Y
\end{equation*}
are continuous.  If in addition all operators $T_g, g\in G$ are isometries, then we say that $G$ acts isometrically on $Y$. We say that $S \in \mathcal{L}(Y)$ commutes with the action $T$ of $G$ on $Y$  whenever $S$ commutes with  all $T_h,\, h\in G$.

The following theorem was presented in \cite{rudin1962projections}  (see also \cite[Theorem III.B.13]{wojtaszczyk1996banach}).
\begin{theorem} \label{rudy}
Let $Y$ be a~Banach space, $X$ a~complemented subspace of\, $Y$, and \,$\mathbf{Q} \colon Y \to Y$ a~projection onto $X$. Suppose that  $G$ is a compact group with Haar measure $\mathrm{m}$, which   acts on $Y$ through $T$ such that $X$ is invariant under the action of $G$, that is,
$T_g(X) \subset X$ for all $g\in G$.
Then $\mathbf{P}\colon Y \to Y$ given by
\begin{equation}\label{equation rudy}
\mathbf{P}(y):= \int_{G} T_{g^{-1}}\mathbf{Q}T_g(y)\,d\mathrm{m}(g), \quad\, y\in Y\,,
\end{equation}
is a~projection onto $X$ which commutes with the action  of $G$ on $Y$ $($meaning that  $T_g\mathbf{P} = \mathbf{P}T_g$ for all
$g\in G$$)$ and satisfies
\[
\|\mathbf{P}\| \,\leq\, \|\mathbf{Q}\|\,\,\,\sup_{g\in G}\|T_g\|^2\,.
\]
Moreover, if there is a unique projection on $Y$ onto $X$ that commutes with the action of $G$ on $Y$, and if $G$ acts isometrically  on $Y$, then $\mathbf{P}$ given in \eqref{equation rudy} is minimal, i.e.,
\[
\boldsymbol{\lambda}(X,Y) = \|\mathbf{P}\|\,.
\]
\end{theorem}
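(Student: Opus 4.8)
The plan is to establish, in order, the four assertions of the theorem: that $\mathbf{P}$ is a well-defined bounded operator obeying the stated norm estimate, that it is a projection onto $X$, that it commutes with the action of $G$, and finally---under the two additional hypotheses---that it is minimal. First I would check that the Bochner integral defining $\mathbf{P}(y)$ makes sense: for fixed $y\in Y$ the map $g\mapsto T_{g^{-1}}\mathbf{Q}T_g(y)$ is continuous from $G$ into $Y$ (it combines continuity of the action $g\mapsto T_g(\cdot)$, continuity of inversion in the topological group $G$, and boundedness of $\mathbf{Q}$), hence it is a continuous $Y$-valued function on the compact group $G$ and is Bochner integrable against the normalized Haar measure $\mathrm{m}$. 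Linearity of $\mathbf{P}$ is immediate. For the norm bound I would move the norm inside the integral and estimate $\|T_{g^{-1}}\mathbf{Q}T_g(y)\|\le \|T_{g^{-1}}\|\,\|\mathbf{Q}\|\,\|T_g\|\,\|y\|$, bounding both $\|T_{g^{-1}}\|$ and $\|T_g\|$ by $\sup_{h\in G}\|T_h\|$; since $\mathrm{m}(G)=1$ this gives $\|\mathbf{P}\|\le \|\mathbf{Q}\|\sup_{h\in G}\|T_h\|^2$.

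Next I would verify that $\mathbf{P}$ is a projection onto $X$. For the range: for each $g$ we have $\mathbf{Q}T_g(y)\in X$ because $X$ is the range of $\mathbf{Q}$, and then $T_{g^{-1}}\mathbf{Q}T_g(y)\in X$ by the invariance $T_{g^{-1}}(X)\subset X$; as $X$ is closed (being complemented) and Bochner integrals of functions valued in a closed subspace remain in that subspace, we get $\mathbf{P}(y)\in X$. For the action on $X$: if $x\in X$ then $T_g(x)\in X$, so $\mathbf{Q}T_g(x)=T_g(x)$ and therefore $T_{g^{-1}}\mathbf{Q}T_g(x)=T_{g^{-1}}T_g(x)=T_e(x)=x$; integrating over the probability space $G$ yields $\mathbf{P}(x)=x$. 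Hence $\mathbf{P}$ is a projection with range exactly $X$.

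The commutation $T_h\mathbf{P}=\mathbf{P}T_h$ is the technical core. Since $T_h$ is bounded and linear it may be pulled inside the Bochner integral, giving $T_h\mathbf{P}(y)=\int_G T_{hg^{-1}}\mathbf{Q}T_g(y)\,d\mathrm{m}(g)$. I would then perform the substitution $g=sh$; by right invariance of the Haar measure $d\mathrm{m}(g)=d\mathrm{m}(s)$, while $hg^{-1}=h(sh)^{-1}=s^{-1}$ and $T_g=T_{sh}=T_sT_h$. The integral thus becomes $\int_G T_{s^{-1}}\mathbf{Q}T_s\big(T_h(y)\big)\,d\mathrm{m}(s)=\mathbf{P}\big(T_h(y)\big)$, which is precisely $T_h\mathbf{P}=\mathbf{P}T_h$.

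For the minimality statement I would observe that the averaging recipe $\mathbf{Q}\mapsto \mathbf{P}_{\mathbf{Q}}:=\int_G T_{g^{-1}}\mathbf{Q}T_g\,d\mathrm{m}(g)$, applied to an \emph{arbitrary} projection $\mathbf{Q}$ of $Y$ onto $X$, again produces (by the three steps above, which use nothing specific about the given $\mathbf{Q}$) a projection onto $X$ commuting with the action. The uniqueness hypothesis then forces $\mathbf{P}_{\mathbf{Q}}=\mathbf{P}$ for every such $\mathbf{Q}$. When $G$ acts isometrically we have $\sup_{h\in G}\|T_h\|=1$, so the norm estimate yields $\|\mathbf{P}\|=\|\mathbf{P}_{\mathbf{Q}}\|\le\|\mathbf{Q}\|$ for every projection $\mathbf{Q}$ of $Y$ onto $X$; taking the infimum over all such $\mathbf{Q}$ and recalling the definition of the relative projection constant gives $\|\mathbf{P}\|\le\boldsymbol{\lambda}(X,Y)$, while the reverse inequality is automatic since $\mathbf{P}$ is itself an admissible projection. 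Hence $\boldsymbol{\lambda}(X,Y)=\|\mathbf{P}\|$. I expect the only delicate points to be the Haar change-of-variables bookkeeping in the commutation step and the clean invocation of the uniqueness hypothesis in the final step; everything else reduces to standard properties of Bochner integrals once well-definedness is secured.
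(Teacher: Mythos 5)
Your argument is correct, and it coincides with the classical proof: the paper itself does not prove Theorem~\ref{rudy} at all, but quotes it from Rudin \cite{rudin1962projections} and Wojtaszczyk \cite[Theorem III.B.13]{wojtaszczyk1996banach}, where the proof is exactly your averaging scheme --- Bochner integrability of the orbit map, invariance of the range of $\mathbf{Q}$ under $T_{g^{-1}}$, the substitution $g=sh$ using the (bi-)invariance of Haar measure on the compact, hence unimodular, group, and re-averaging an arbitrary projection so that the uniqueness hypothesis forces it back to $\mathbf{P}$. One point you pass over too quickly: the continuity of $g\mapsto T_{g^{-1}}\mathbf{Q}T_g(y)$ does not follow from strong continuity of the action alone, since the group element moves simultaneously in the outer operator $T_{g^{-1}}$ and in its argument $\mathbf{Q}T_g(y)$; you need joint continuity of $(g,z)\mapsto T_g(z)$, which requires $M:=\sup_{g\in G}\|T_g\|<\infty$. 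This is not an explicit hypothesis, but it follows from the Banach--Steinhaus theorem: strong continuity on the compact group $G$ makes each orbit $\{T_g y : g\in G\}$ compact, hence bounded, and then $\|T_g z-T_{g_0}z_0\|\le M\|z-z_0\|+\|T_g z_0-T_{g_0}z_0\|$ gives the joint continuity (and, incidentally, guarantees the integrand is bounded before the norm estimate is even stated). With that observation supplied, every step --- including $T_{g^{-1}}T_g=T_e=I_Y$ on $X$, the identity $T_hT_{g^{-1}}=T_{hg^{-1}}$, and the final two-sided comparison yielding $\boldsymbol{\lambda}(X,Y)=\|\mathbf{P}\|$ --- is sound, and your invocation of uniqueness (both $\mathbf{P}$ and each averaged $\mathbf{P}_{\mathbf{Q}}$ commute with the action, hence coincide) is exactly right.
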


In order to be able to apply  Rudin's technique, we need to  endow  $\mathcal U_n \times \mathcal U_n$
with a special group structure, which allows to represent the resulting group in   $\mathcal{L}(C(\mathcal{U}_n))$.
To do so, consider on $\mathcal U_n \times \mathcal U_n$  the  multiplication
\[
(U_0,V_0)\cdot (U_1,V_1):=(U_1U_0,V_0V_1)\,.
\]
With this multiplication and endowed with the product topology, $\mathcal U_n \times \mathcal U_n$ turns into a compact topological group,
and it may be seen easily  that the Haar measure on $\mathcal U_n \times \mathcal U_n$ is given by the product measure of the Haar measure on $\mathcal U_n $
with itself.

Further, for  any $(U,V) \in \mathcal U_n \times \mathcal U_n $ and  any $f \in L_2(\mathcal U_n)$ we define
\begin{align*}\label{action of U^2}
    \rho_{(U,V)}f : = (C_{L_U}\circ C_{R_V})f = f\circ L_U\circ R_V\,,
\end{align*}
which leads to an action of $\mathcal U_n \times \mathcal U_n$ on $C(\mathcal{U}_n)$
given by
\begin{equation} \label{actU}
  \mathcal U_n \times \mathcal U_n  \to \mathcal{L}\big(C(\mathcal{U}_n)\big)\,,\,\,\, \,\,\,
  (U,V) \mapsto \big[\rho_{(U,V)}: f \mapsto f\circ L_U\circ R_V\big] \,.
\end{equation}
We say that a mapping  $T: S_1 \to S_2 $, where
$S_1$ and $S_2$ both  are $\mathcal{U}_n$-invariant subspaces of $L_2(\mathcal{U}_n)$, commutes with the action of
$\mathcal{U}_n \times \mathcal{U}_n$
on $C(\mathcal{U}_n)$, whenever
\[
\text{$\big(C_{L_U}\circ C_{R_{V}}\big)(Tf)=T\big(( C_{L_U}\circ C_{R_V})f\big)$
\quad for every \quad $(U,V)\in \mathcal U_n \times \mathcal U_n$ and $f\in S_1$\,.}
\]

\subsection{Convolution} Recall from Section~\ref{Unitaries}
that $\pi_S: L_2(\mathcal U_n) \to S$ denotes
the orthogonal projection on $L_2(\mathcal U_n)$ onto a given closed subspace $S$. The following result
shows that under the assumption of $\mathcal U_n$-invariance of $S$, this projection is a convolution operator with respect to some kernel in $S$.

\begin{theorem} \label{newperspective}
Let $S$ be a $\mathcal U_n$-invariant subspace of  $C(\mathcal{U}_n)$, which is closed in  $L_2(\mathcal U_n)$.
Then the following holds true{\rm:}
\begin{itemize}
\item[(i)]There is a unique function $\mathrm{t}_S\in S$ such that for all  $f\in L_2(\mathcal{U}_n)$
\[
\pi_S f = f \ast \mathrm{t}_S\,,
\]
\item[(ii)]
$\pi_S$ commutes with all $L_U$ and $R_U$ for $U\in\mathcal{U}_n$, that is, $\pi_S$ commutes with the action of
$\mathcal{U}_n \times  \mathcal{U}_n$
on $C(\mathcal{U}_n)$\,,
\vspace{2mm}
\item[(iii)]
$\|\pi_S: C(\mathcal{U}_n) \to S\|  = \int_{\mathcal{U}_n }|\mathrm{t}_S (V)| dV $\,.
\end{itemize}

\end{theorem}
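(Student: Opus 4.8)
The plan is to establish the three assertions in sequence, exploiting the $\mathcal U_n$-invariance of $S$ to produce a reproducing kernel, and then to identify the operator norm of the projection restricted to $C(\mathcal U_n)$ with the $L_1$-norm of that kernel.

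\emph{Proof of (i).} Since $S$ is finite-dimensional (being a space of polynomials) and closed in $L_2(\mathcal U_n)$, the orthogonal projection $\pi_S$ is an integral operator against its reproducing kernel. First I would show that $\mathcal U_n$-invariance forces this kernel to depend only on the ``difference'' of its arguments, so that $\pi_S$ is a convolution. Concretely, fix an orthonormal basis $(\varphi_i)$ of $S$ and write $\pi_S f(U) = \int_{\mathcal U_n} f(W)\,K(U,W)\,dW$ with $K(U,W)=\sum_i \varphi_i(U)\overline{\varphi_i(W)}$. Using part (ii) (which I would in fact prove first, see below) that $\pi_S$ commutes with $C_{R_V}$ and $C_{L_V}$, a change of variables in the Haar integral shows $K(UV,WV)=K(U,W)$ and $K(VU,VW)=K(U,W)$ for all $V\in\mathcal U_n$. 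Setting $W=I$ in the first relation and $V=W^{-1}$ gives $K(U,W)=K(UW^{-1},I)$; writing $\mathrm t_S(U):=K(U,I)\in S$ then yields
\[
\pi_S f(U) = \int_{\mathcal U_n} f(W)\,\mathrm t_S(UW^{-1})\,dW = (f\ast \mathrm t_S)(U),
\]
where $\ast$ is convolution on $\mathcal U_n$. Uniqueness of $\mathrm t_S$ follows because $\mathrm t_S = \pi_S \mathrm t_S$ and convolution against a kernel in $S$ determines that kernel (apply $\pi_S$ to the Dirac-like reproducing identity, or use that $\langle f, \mathrm t_S'-\mathrm t_S\rangle=0$ for all $f$).

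\emph{Proof of (ii).} This is the pivotal commutation step, so I would actually do it before (i). The key point is that an orthogonal projection onto a $\mathcal U_n$-invariant subspace commutes with the inducing action. Since each $C_{L_V}$ and $C_{R_V}$ is a unitary operator on $L_2(\mathcal U_n)$ (the Haar measure is left- and right-invariant) that maps $S$ onto $S$, it commutes with $\pi_S$: for a unitary $U$ preserving a closed subspace $S$, one has $U\pi_S = \pi_S U$, because $\pi_S$ is characterized as the unique self-adjoint idempotent with range $S$, and $U\pi_S U^{-1}$ is again such an idempotent with the same range. Applying this with $U=C_{L_V}$ and $U=C_{R_V}$ gives exactly the stated commutation with the action of $\mathcal U_n\times\mathcal U_n$.

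\emph{Proof of (iii).} Here I must compute the norm of $\pi_S$ as an operator from $C(\mathcal U_n)$ (sup norm) into $S$ (also with sup norm). The upper bound is immediate from the convolution formula: for $f\in C(\mathcal U_n)$ with $\|f\|_\infty\le 1$,
\[
|\pi_S f(U)| = \Bigl|\int_{\mathcal U_n} f(W)\,\mathrm t_S(UW^{-1})\,dW\Bigr| \le \int_{\mathcal U_n} |\mathrm t_S(UW^{-1})|\,dW = \int_{\mathcal U_n}|\mathrm t_S(V)|\,dV
\]
by right-invariance of the Haar measure, so $\|\pi_S\|\le \int_{\mathcal U_n}|\mathrm t_S(V)|\,dV$. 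The matching lower bound is the part I expect to be the main obstacle, since the supremum over $\|f\|_\infty\le 1$ need not be attained at any continuous $f$: the optimal choice is the discontinuous $f(W)=\overline{\mathrm t_S(W^{-1})}/|\mathrm t_S(W^{-1})|$ (a unimodular function), which at $U=I$ would formally give $\pi_S f(I)=\int|\mathrm t_S(V)|\,dV$. To make this rigorous I would approximate this unimodular function uniformly on the complement of a small neighborhood of the (measure-zero) set where $\mathrm t_S$ vanishes by continuous functions $f_\varepsilon$ with $\|f_\varepsilon\|_\infty\le 1$, and check via dominated convergence that $\pi_S f_\varepsilon(I)\to\int_{\mathcal U_n}|\mathrm t_S(V)|\,dV$. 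This uses that $\mathrm t_S$, being a polynomial restricted to $\mathcal U_n$, has zero set of Haar measure zero unless it is identically zero (the trivial case). Combining the two bounds gives the asserted equality, completing the proof.
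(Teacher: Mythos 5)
Your proposal is correct and follows essentially the same route as the paper: both realize $\pi_S$ through its reproducing kernel, use the invariance of $S$ (and hence of $S^{\perp}$) under the unitary operators $C_{L_V},C_{R_V}$ to show the kernel depends only on $UW^{-1}$, define $\mathrm{t}_S$ as the kernel at the identity, and identify $\|\pi_S\|$ with the $L_1$-norm of $\mathrm{t}_S$ via the $C(K)$--$M(K)$ duality. The only differences are presentational: the paper produces the kernel from the Riesz representation theorem rather than an orthonormal basis, and takes the lower bound in (iii) as standard, where you spell out the unimodular-approximation argument.
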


The proof is an easy consequence of the following lemma.

\begin{lemma}\label{theorem kernel 12.2.5}
Let $S$ be a $\mathcal U_n$-invariant subspace of  $C(\mathcal{U}_n)$, which is closed in  $L_2(\mathcal U_n)$.
 Then for every  $U\in\mathcal{U}_n$, there exists a unique function $K_U^S\in S$ such that
for all $f\in L_2(\mathcal{U}_n)$
\begin{itemize}
\item[(i)]\label{12.2.5 (1,3)}  $(\pi_Sf)(U)=\big\langle f, K_U^S\big\rangle_{L_2}
=\int_{\mathcal{U}_n}f(V)\overline{K_U^S(V)}dV\,,$
\end{itemize}
and moreover for every choice of $U,V\in\mathcal U_n$ we have
\begin{itemize}
\item[(ii)]
\label{12.2.5 (2)} $K_U^S(V)= \big\langle K_U^S, K_V^S\big\rangle_{L_2}=\overline{K_V^S(U)}$\,,
\vspace{1mm}
\item[{(iii)}]\label{12.2.5 (4)}
$
K_U^S\circ L_{V^{-1}}=K_{VU}^S=K_V^S\circ R_{U^{-1}}\,,
$
\vspace{1mm}
\item[(iv)]\label{12.2.5 (6)} $K_V^S(V)=K_{Id}^S(Id)>0\,.$
\end{itemize}
\end{lemma}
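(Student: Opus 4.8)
The plan is to treat $(S,\langle\cdot,\cdot\rangle_{L_2})$ as a reproducing kernel Hilbert space inside $L_2(\mathcal{U}_n)$ and to read off all four assertions from the reproducing property together with the invariance of the Haar measure. First I would settle part~(i). The one genuine analytic point --- and the step I expect to be the main obstacle --- is that the point evaluations $h \mapsto h(U)$ are bounded on $S$ for the $L_2$-norm; everything else is then formal. Since $S \subset C(\mathcal{U}_n)$ is closed in $L_2(\mathcal{U}_n)$, it is a Hilbert space in the $L_2$-inner product, and I would prove that the inclusion $j\colon (S,\|\cdot\|_{L_2}) \to (C(\mathcal{U}_n),\|\cdot\|_\infty)$ has closed graph: if $f_m \to f$ in $L_2$ and $f_m = j(f_m) \to g$ uniformly, then $f_m \to g$ in $L_2$ as well (the Haar measure is a probability measure), so $f=g$ a.e., and as both are continuous, $f=g$ everywhere. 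The closed graph theorem then yields a constant $C$ with $\|h\|_\infty \le C\|h\|_{L_2}$ for $h \in S$, whence $|h(U)| \le C\|h\|_{L_2}$. By the Riesz representation theorem there is a unique $K_U^S \in S$ with $h(U)=\langle h, K_U^S\rangle_{L_2}$ for all $h \in S$; for general $f \in L_2(\mathcal{U}_n)$ I then use self-adjointness of $\pi_S$ and $K_U^S \in S$ to write $(\pi_S f)(U)=\langle \pi_S f, K_U^S\rangle_{L_2}=\langle f, K_U^S\rangle_{L_2}$, which is (i); uniqueness follows by testing against $f=K_U^S-\tilde K_U^S$.

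Parts (ii) and (iv) should then drop out of the reproducing property. Since $K_U^S \in S$ we have $\pi_S K_U^S = K_U^S$, so applying (i) at the point $V$ gives $K_U^S(V)=(\pi_S K_U^S)(V)=\langle K_U^S, K_V^S\rangle_{L_2}$; conjugate symmetry of the inner product then yields $K_U^S(V)=\overline{\langle K_V^S, K_U^S\rangle_{L_2}}=\overline{K_V^S(U)}$, which is (ii). In particular $K_V^S(V)=\|K_V^S\|_{L_2}^2 \ge 0$, and once (iii) is available the translation formula with $U=Id$ gives $K_V^S=K_{Id}^S\circ L_{V^{-1}}$, hence $K_V^S(V)=K_{Id}^S(Id)$; the strict positivity in (iv) then only requires $S \neq \{0\}$, for if $K_{Id}^S(Id)=0$ then every $K_V^S$ vanishes and (i) forces $\pi_S=0$.

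The remaining work is (iii), where the invariance of $S$ enters. The key preliminary observation is that each composition operator $C_{L_V}$ (resp.\ $C_{R_V}$) is unitary on $L_2(\mathcal{U}_n)$ by left- (resp.\ right-) invariance of the Haar measure, and maps the invariant subspace $S$ onto itself; hence it commutes with the orthogonal projection $\pi_S$. I would then compute, for arbitrary $f$, $(\pi_S f)(VU)=(C_{L_V}\pi_S f)(U)=(\pi_S C_{L_V}f)(U)=\langle f\circ L_V, K_U^S\rangle_{L_2}$, and after the substitution $W\mapsto V^{-1}W$ in the integral recognize the right-hand side as $\langle f, K_U^S\circ L_{V^{-1}}\rangle_{L_2}$; comparing with $(\pi_S f)(VU)=\langle f, K_{VU}^S\rangle_{L_2}$ and letting $f$ range over $L_2(\mathcal{U}_n)$ gives $K_{VU}^S=K_U^S\circ L_{V^{-1}}$. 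The symmetric computation through $C_{R_U}$, using $(\pi_S f)(VU)=(C_{R_U}\pi_S f)(V)$ and unimodularity for the change of variables $W\mapsto WU^{-1}$, produces $K_{VU}^S=K_V^S\circ R_{U^{-1}}$, completing (iii). I would finally order the argument as (i), (ii), (iii), then (iv), so that the translation identity is in hand when proving the constancy and positivity of $K_V^S(V)$.
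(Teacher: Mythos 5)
Your proposal is correct and follows essentially the same route as the paper: Riesz representation for (i), the reproducing property for (ii), commutation of $\pi_S$ with the (unitary) translation operators for (iii), and the translation identity for (iv). The one genuine addition is your closed-graph argument showing that point evaluations on $S$ are $L_2$-bounded, a point the paper's appeal to "the continuous linear functional $f\mapsto(\pi_Sf)(U)$" leaves implicit; this (together with your remark that (iv) needs $S\neq\{0\}$) makes your write-up slightly more complete, not different in substance.
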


\begin{proof}
The claim from $(i)$ is an immediate consequence of the Riesz representation theorem applied to the continuous linear functional
$
L_2(\mathcal U_n)  \to  \mathbb{C}\,,\,\,\, f \mapsto (\pi_Sf)(U)\,.
$

\noindent $(ii)$ $K_U^S(V)=\pi_S(K_U^S)(V)=\big\langle K_U^S, K_V^S\big\rangle_{L_2}=\overline{\big\langle K_V^S, K_U^S\big\rangle_{L_2}}=\overline{K_V^S(U)}$
for all $V\in\mathcal U_n$.

\noindent $(iii)$
Fix some $ V\in \mathcal U_n$ and
$f\in L_2(\mathcal U_n)$, and note
first that  $S^\perp$ is also $\mathcal U_n$-invariant. Then
\[
\text{$(Id-\pi_S)(f)\circ L_V \in S^\perp$
\,\,\,\, and \,\,\,\,$f\circ L_V=\pi_S(f)\circ L_V+(Id-\pi_S)(f)\circ L_V$\,,}
\]
and hence
\begin{align}\label{eq: pi_S commutes with L_V}
\pi_S(f\circ L_V)=\pi_S(\pi_S(f)\circ L_V)+\pi_S((Id-\pi_S)(f)\circ L_V)=\pi_S(f)\circ L_V\,.
\end{align}
Then
\[
\big\langle f, K_{VU}^S\big\rangle_{L_2}=\pi_S(f)(VU)=\pi_S(f)\circ L_V(U)=\pi_S(f\circ L_V)(U)\,,
\]
and thus  by $(i)$
\[
\big\langle f, K_{VU}^S\big\rangle_{L_2}= \big\langle f\circ L_V, K_{U}^S\big\rangle_{L_2} = \big\langle C_{L_V}f , K_{U}^S\big\rangle_{L_2}
= \big\langle  f , C_{L_{V^{-1}}} K_{U}^S\big\rangle_{L_2} = \big\langle  f ,  K_{U}^S \circ L_{V^{-1}}\big\rangle_{L_2}\,.
\]
Since $f\in L_2(\mathcal U_n)$ was chosen arbitrarily, we obtain that  $K_{VU}^S=K_{U}^S \circ L_{V^{-1}}.$ The other identity follows similarly.

\noindent
$(iv)$ Let $V\in\mathcal U_n, $ then
\begin{eqnarray*}
K_V^S(V)=\big\langle K_V^S, K_V^S\big\rangle_{L_2}=\big\langle K_{Id}^S\circ L_{V^{-1}}, K_V^S\big\rangle_{L_2}=\big\langle K_{Id}^S, K_V^S\circ L_V\big\rangle_{L_2}=\big\langle K_{Id}^S, K_{Id}^S\big\rangle_{L_2}=K_{Id}^S(Id)>0\,. \qedhere
\end{eqnarray*}
\end{proof}

\smallskip

It remains to prove  Theorem~\ref{newperspective}. Defining
\begin{equation}\label{new}
  \mathrm{t}_S  := K^S_{Id}\,,
\end{equation}
this proof is  in fact  a straight forward consequence of the preceding lemma.
But before we do this, we collect  two elementary properties of the kernel $\mathrm{t}_S $.

\begin{remark} \label{properties}
Let $S$ be a $\mathcal U_n$-invariant subspace of  $C(\mathcal{U}_n)$, which is closed in  $L_2(\mathcal U_n)$. Then   $\mathrm{t}_S  = K^S_{Id}$
satisfies:
  \begin{itemize}
    \item
  $\mathrm{t}_S(V^\ast) = \overline{\mathrm{t}_S(V)}$  for all $V \in \mathcal{U}_n$\,,
  \item
$\mathrm{t}_S(V^{*}UV) = \mathrm{t}_S(U)$ for all $U,V \in \mathcal{U}_n$, that is, $\mathrm{t}_S$ is a so-called class function.
\end{itemize}
\end{remark}

Indeed, for the first equality note that
\[
\mathrm{t}_S(V^\ast) = (K_{Id}^S \circ L_{V^{-1}})(Id) = K^S_V(Id) = \overline{ K^{Id}_V(S)} = \overline{\mathrm{t}_S(V)}\,,
\]
and together with this we get
\begin{align*}
   \mathrm{t}_S(V^{-1}UV)
   &
   =  \overline{\mathrm{t}_S(V^{-1} U^\ast V)} = \overline{(K_{Id}^S \circ L_{V^{-1}})(U^{\ast}V)}
    \\&
    =  \overline{K_{V}^S (U^{\ast}V)} =  K_{U^{\ast}V}^S (V) =  K_{Id}^S \circ R_{V^{-1}U}(V) =\mathrm{t}_S(U).
\end{align*}

\begin{proof}[Proof of Theorem~\ref{newperspective}]
 By Lemma~\ref{theorem kernel 12.2.5} for all $U \in \mathcal{U}_n$ and $f \in L_2(\mathcal{U}_n)$
\begin{align*}
(\pi_Sf)(U)
&
=\int_{\mathcal{U}_n}f(V)\overline{K_U^S(V)}dV
\\&
=\int_{\mathcal{U}_n}f(V)K_V^S(U)dV
\\&
=\int_{\mathcal{U}_n}f(V) K^S_{Id}(UV^{-1})dV
=\int_{\mathcal{U}_n}f(V) \mathrm{t}_S(UV^\ast)dV = (f \ast \mathrm{t}_S)(U)\,,
 \end{align*}
which proves $(i)$. Statement $(ii)$ was already shown in \eqref{eq: pi_S commutes with L_V}, and it remains to check $(iii)$.
Obviously, we have that
\[
\|\pi_S: C(\mathcal{U}_n) \to S\| =  \sup_{U \in \mathcal{U}_n} \int_{\mathcal{U}_n} |\mathrm{t}_S(UV^\ast)| dV\,,
\]
and for every $U \in \mathcal{U}_n$ by Remark~\ref{properties}
\[
 \int_{\mathcal{U}_n} |\mathrm{t}_S(UV^\ast)| dV = \int_{\mathcal{U}_n} |\mathrm{t}_S(V^\ast)| dV
 =\int_{\mathcal{U}_n} |\mathrm{t}_S(V)| dV\,.
\]
This completes the argument.
\end{proof}

\subsection{Accessibility}
  Let $S$  be $\mathcal U_n$-invariant subspace of  $C(\mathcal{U}_n)$, which is closed in  $L_2(\mathcal U_n)$. Then $S$ is called   \emph{accessible}
  if every projection $Q$  on $C(\mathcal U_n)$ onto $S$, which  commutes with the action of
$\mathcal U_n \times \mathcal U_n $
on $C(\mathcal U_n)$, equals  $\pi_S\restrict{C(\mathcal U_n)}$.

\begin{theorem} \label{abstract}
  Let $S$ be a $\mathcal U_n$-invariant and  accessible  subspace of  $C(\mathcal{U}_n)$, which is closed in  $L_2(\mathcal U_n)$. Then
  \[
\boldsymbol{\lambda}(S) =  \big\|\pi_S:C(\mathcal U_n) \to S\big\|=\int_{\mathcal{U}_n }|\mathrm{t}_S(V)| dV \,.
\]
\end{theorem}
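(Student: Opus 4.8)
The plan is to combine the three tools already assembled, namely Rudin's averaging technique (Theorem~\ref{rudy}), the convolution description of $\pi_S$ (Theorem~\ref{newperspective}), and the hypothesis of accessibility, invoking the isometric embedding $\mathcal{S}_1(n)\hookrightarrow C(\mathcal{U}_n)$ only in spirit since here $S$ is an abstract subspace. First I would record that the projection constant of $S$, viewed as a Banach space in its own right, is computed via \eqref{essentialpoint} as a relative projection constant inside $C(\mathcal{U}_n)$; more precisely, since $C(\mathcal{U}_n)$ is itself a $C(K)$-space (so a $\mathcal{P}_1$-space / injective up to the relevant embedding), it suffices to show $\boldsymbol{\lambda}(S) = \boldsymbol{\lambda}\big(S, C(\mathcal{U}_n)\big)$ and then identify the latter with $\|\pi_S\|$. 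The key point enabling this reduction is that $S\subset C(\mathcal{U}_n)$ and $C(\mathcal{U}_n)$ embeds isometrically into some $\ell_\infty(\Gamma)$, while any projection from $\ell_\infty(\Gamma)$ onto $S$ restricts to one on $C(\mathcal{U}_n)$ with no larger norm.

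Next I would set up Rudin's machine with $Y = C(\mathcal{U}_n)$, $X = S$, and $G = \mathcal{U}_n\times\mathcal{U}_n$ endowed with the twisted multiplication $(U_0,V_0)\cdot(U_1,V_1) = (U_1U_0,V_0V_1)$ and acting isometrically through $\rho_{(U,V)}f = f\circ L_U\circ R_V$ as in \eqref{actU}. Each $\rho_{(U,V)}$ is an isometry of $C(\mathcal{U}_n)$ because $L_U$ and $R_V$ map $\mathcal{U}_n$ bijectively onto itself, so $\sup_{g}\|T_g\| = 1$; and $\mathcal{U}_n$-invariance of $S$ says exactly that $S$ is invariant under this action, so Theorem~\ref{rudy} applies. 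Starting from any projection $\mathbf{Q}$ of $C(\mathcal{U}_n)$ onto $S$, the averaged operator $\mathbf{P}$ from \eqref{equation rudy} is a projection onto $S$ that commutes with the action of $G$ and satisfies $\|\mathbf{P}\|\le\|\mathbf{Q}\|$.

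Here is where accessibility does its work: $\mathbf{P}$ commutes with the action of $\mathcal{U}_n\times\mathcal{U}_n$, so by the very definition of accessibility $\mathbf{P} = \pi_S\restrict{C(\mathcal{U}_n)}$. Combined with Theorem~\ref{newperspective}(ii), which guarantees that $\pi_S$ itself commutes with the action, the hypothesis yields uniqueness of the commuting projection, so the second clause of Theorem~\ref{rudy} gives $\boldsymbol{\lambda}(S,C(\mathcal{U}_n)) = \|\pi_S:C(\mathcal{U}_n)\to S\|$. Finally I would invoke Theorem~\ref{newperspective}(iii) to rewrite this norm as $\int_{\mathcal{U}_n}|\mathrm{t}_S(V)|\,dV$, completing the chain of equalities. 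The main obstacle I anticipate is not any of the group-action verifications, which are routine once the twisted product is in place, but rather the first reduction $\boldsymbol{\lambda}(S) = \boldsymbol{\lambda}(S,C(\mathcal{U}_n))$: one must argue carefully that passing from the ambient $\ell_\infty(\Gamma)$ attaining the absolute projection constant down to $C(\mathcal{U}_n)$ does not lose anything, which hinges on $C(\mathcal{U}_n)$ being a $1$-complemented (or at least norm-one-extending) intermediate space and on $\pi_S$ being realizable as the restriction of a genuine minimal projection. I would handle this by observing that $\|\pi_S\|\ge\boldsymbol{\lambda}(S)$ always fails the wrong way, so instead one shows $\boldsymbol{\lambda}(S)\le\|\pi_S\|$ via the convolution projection and $\boldsymbol{\lambda}(S)\ge\|\mathbf{P}\| = \|\pi_S\|$ via Rudin, sandwiching the two.
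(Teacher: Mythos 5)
Your overall architecture is the paper's: set up Rudin's Theorem~\ref{rudy} with $Y=C(\mathcal U_n)$, $X=S$ and the twisted group $\mathcal U_n\times\mathcal U_n$ acting isometrically via $\rho_{(U,V)}$; observe that the averaged projection commutes with the action, so accessibility forces it to be $\pi_S\restrict{C(\mathcal U_n)}$, while Theorem~\ref{newperspective}~(ii) guarantees that $\pi_S$ does commute, so it is the \emph{unique} commuting projection; conclude $\boldsymbol{\lambda}\big(S,C(\mathcal U_n)\big)=\|\pi_S\colon C(\mathcal U_n)\to S\|$ and rewrite this norm via Theorem~\ref{newperspective}~(iii). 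All of that is correct and is exactly what the paper's (very terse) proof records.

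The genuine gap is the step $\boldsymbol{\lambda}(S)=\boldsymbol{\lambda}\big(S,C(\mathcal U_n)\big)$, which you correctly single out as the delicate point but then justify with a false claim. Restricting projections from $\ell_\infty(\Gamma)$ to $C(\mathcal U_n)$, as well as your closing ``sandwich,'' only yields the easy inequality $\boldsymbol{\lambda}(S)\ge\boldsymbol{\lambda}\big(S,C(\mathcal U_n)\big)=\|\pi_S\|$; the substance is $\boldsymbol{\lambda}(S)\le\|\pi_S\|$, i.e.\ producing a projection of norm at most $\|\pi_S\|$ from \emph{every} superspace of $S$. Your stated reason --- that $C(\mathcal U_n)$ is a $\mathcal P_1$-space, or a $1$-complemented intermediate space in $\ell_\infty(\Gamma)$ --- is false: a $C(K)$-space is injective only for $K$ extremally disconnected, and $\mathcal U_n$ is connected. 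A correct repair uses the convolution formula itself: $\Phi h:=h\ast\mathrm{t}_S$ defines a map $\Phi\colon L_\infty(\mathcal U_n)\to S$ with $\Phi\restrict{S}=\mathrm{Id}_S$ and $\|\Phi\|\le\int_{\mathcal U_n}|\mathrm{t}_S(V)|\,dV$; since $L_\infty(\mathcal U_n)$ \emph{is} injective, for any isometric embedding $S\subset Z$ one extends the inclusion $S\hookrightarrow L_\infty(\mathcal U_n)$ to a norm-one operator $R\colon Z\to L_\infty(\mathcal U_n)$ and takes $\Phi\circ R$ as the projection. (Alternatively: the $L_2$- and sup-norms are equivalent on $S$, so $S$ is reflexive, $\pi_S^{\ast\ast}$ maps the injective space $C(\mathcal U_n)^{\ast\ast}$ onto $S$ with the same norm, and one composes with a norm-one extension of $S\hookrightarrow C(\mathcal U_n)^{\ast\ast}$.) The paper compresses this into the citation of~\eqref{essentialpoint}, so you are not being held to a higher standard than the authors; but the specific justification you propose would fail as written.
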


  \begin{proof}
  The proof is an immediate consequence of  Rudin's Theorem~\ref{rudy} and the assumptions on $S$, taking into account that we
  know $(ii)$ and $(iii)$ from Theorem~\ref{newperspective} as well as \eqref{essentialpoint}.
        \end{proof}

We say that
 a $\mathcal U_n$-invariant subspace $S$ of  $C(\mathcal{U}_n)$, which is closed in  $L_2(\mathcal U_n)$, is  \emph{strongly accessible}, whenever  every $f \in S$ for which $ f(VUV^{*}) =f(U)$ for all
$U,V~\in~\mathcal{U}_n$, is a scalar  multiple of  $\mathrm{t}_S$. In other words, every class function in $S$ is a multiple of  $\mathrm{t}_S$\,.

As the name in the previous  definition suggests, we have the following key result.

\begin{proposition} \label{prop 12.2.7 for C(U)}
  Let $S$ be a $\mathcal U_n$-invariant subspace of $C(\mathcal{U}_n)$, which is closed in  $L_2(\mathcal U_n)$. Then $S$ is  accessible whenever it is strongly accessible.
\end{proposition}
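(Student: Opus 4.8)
The plan is to show that if $S$ is strongly accessible then any projection $Q$ onto $S$ commuting with the action of $\mathcal U_n \times \mathcal U_n$ must equal $\pi_S$ (restricted to $C(\mathcal U_n)$), which is exactly the definition of accessibility. Let $Q$ be such a projection. The key idea is to exploit the commutation with both left and right multiplications simultaneously to force $Q$ to act as convolution by a class function, and then invoke strong accessibility to pin down that class function up to scale; the scaling is then fixed by the projection property $Q|_S = \mathrm{id}_S$.

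\medskip

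First I would introduce, for a fixed $f \in C(\mathcal U_n)$ and $U \in \mathcal U_n$, the function $Qf$ and study how the commutation relations constrain its values. The natural object to produce is the kernel of $Q$. Since $Q$ commutes with all $C_{L_U}$ and $C_{R_V}$, one expects $Qf = f \ast h$ for some $h \in S$, by the same Riesz-representation reasoning used in Lemma~\ref{theorem kernel 12.2.5}: define $h(U) := (Qf_0)$-type quantities via the functional $f \mapsto (Qf)(\mathrm{Id})$, which is a bounded linear functional on $L_2$ (here one uses that $Q$ is bounded on $C(\mathcal U_n)$, and a density/approximation argument to make sense of it on $L_2$, or one works directly on the dense subspace $\mathfrak H(\mathcal U_n)$ supplied by Theorem~\ref{den-matrix}). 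Writing $h$ for the Riesz representative, commutation with right multiplication $C_{R_V}$ translates the evaluation at $\mathrm{Id}$ to evaluation at $V^{*}$, and commutation with left multiplication $C_{L_U}$ shows that $h$ is the convolution kernel, exactly mirroring the computation in the proof of Theorem~\ref{newperspective}.

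\medskip

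Next I would show that this kernel $h$ is a \emph{class function}, i.e., $h(VUV^{*}) = h(U)$ for all $U,V \in \mathcal U_n$. This is where using both left and right invariance together is essential: convolving on the left by $V$ and on the right by $V^{*}$ corresponds to conjugation, and commutation of $Q$ with $\rho_{(V,V^{*})} = C_{L_V}\circ C_{R_{V^{*}}}$ forces the kernel to be invariant under $U \mapsto V^{*}UV$. Since $Q$ maps into $S$, we also have $h \in S$. Now strong accessibility applies: every class function in $S$ is a scalar multiple of $\mathrm{t}_S = K^S_{\mathrm{Id}}$, so $h = c\,\mathrm{t}_S$ for some scalar $c$. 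Finally, to fix $c$, I use that $Q$ is a projection onto $S$, so $Q$ acts as the identity on $S$; comparing $Qg = g \ast (c\,\mathrm{t}_S)$ with $\pi_S g = g \ast \mathrm{t}_S = g$ for $g \in S$ (using part $(i)$ of Theorem~\ref{newperspective} and that $\pi_S|_S = \mathrm{id}$) forces $c = 1$. Hence $Q = \pi_S$ on a dense subspace, and by continuity on all of $C(\mathcal U_n)$, giving accessibility.

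\medskip

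The main obstacle I anticipate is justifying the existence of the convolution kernel for $Q$ cleanly: $Q$ is a priori only a bounded operator on $C(\mathcal U_n)$, not on $L_2(\mathcal U_n)$, so the Riesz-representation step needs care. I would handle this either by restricting attention to the dense subspace $\mathfrak H(\mathcal U_n)$ from Theorem~\ref{den-matrix} where things are polynomial and everything is finite-dimensional on each $\mathfrak H_{(p,q)}(\mathcal U_n)$, or by observing that the commutation relations force $Q$ to respect the $L_2$-orthogonal decomposition into $\mathcal U_n \times \mathcal U_n$-invariant pieces and then arguing componentwise. The conjugation-invariance (class function) step and the final normalization are comparatively routine once the kernel is in hand.
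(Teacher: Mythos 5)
Your overall strategy --- force the ``kernel'' of $Q$ to be a class function lying in $S$, invoke strong accessibility to get a multiple of $\mathrm{t}_S$, and normalize using $Q|_S=\mathrm{id}_S$ --- has the right ingredients, but the step you yourself flag as the main obstacle is a genuine gap, and neither of your proposed workarounds closes it. Globally, the functional $f\mapsto (Qf)(\mathrm{Id})$ is bounded only for the sup-norm, so Riesz--Markov gives a representing \emph{measure}, not an element of $L_2(\mathcal U_n)$, and your claim ``since $Q$ maps into $S$, we also have $h\in S$'' is exactly what would need proof. Working componentwise does not repair this: on a finite-dimensional piece $\mathfrak H_{(p,q)}(\mathcal U_n)$ the Riesz representative of $f\mapsto(Qf)(\mathrm{Id})$ lives in $\mathfrak H_{(p,q)}(\mathcal U_n)$, not in $S$, so strong accessibility (a statement about class functions \emph{in} $S$) does not apply to it; and your suggested ``$L_2$-orthogonal decomposition into invariant pieces'' is unavailable in the naive sense, since the spaces $\mathfrak H_{(p,q)}(\mathcal U_n)$ are not mutually orthogonal in $L_2(\mathcal U_n)$ (see \eqref{no-ortho}). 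Finally, even granting that on each piece $Q$ restricts to $\gamma_{p,q}\,\pi_S$, your normalization argument only uses $Q|_S=\mathrm{id}_S$ and therefore cannot determine the constants $\gamma_{p,q}$ on pieces that are not contained in $S$.

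The paper's proof circumvents all of this by never representing $Q$ itself as a convolution. For each $(p,q)$ it forms $H:=\{f-\pi_S f:\ f\in\mathfrak H_{(p,q)}(\mathcal U_n)\}$, which is $\mathcal U_n$-invariant and orthogonal to $S$, and applies $Q$ to the reproducing kernel $\mathrm{t}_H$ of $H$ (Lemma~\ref{prop 12.2.7 for L2(U)}): the image $Q\,\mathrm{t}_H$ automatically lies in $S$ and is a class function, because $\mathrm{t}_H$ is one and $Q$ commutes with conjugation, so strong accessibility yields $Q\,\mathrm{t}_H=\gamma\,\mathrm{t}_S$, whence $Qh=h\ast Q\mathrm{t}_H=\gamma\,\pi_S h=0$ for $h\in H$ by orthogonality; together with $Q\pi_Sf=\pi_Sf$ this gives $Qf=\pi_Sf$ on each $\mathfrak H_{(p,q)}(\mathcal U_n)$ and hence, by the density from Theorem~\ref{den-matrix}, everywhere. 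If you want to keep your kernel-based outline, the fix is precisely this: apply the operator to a known class function (the reproducing kernel of an invariant subspace orthogonal to $S$) rather than trying to extract a kernel for $Q$ itself.
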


The proof requires the next statement.

\begin{lemma}\label{prop 12.2.7 for L2(U)}
Let $H$ and $S$ be  $\mathcal U_n$-invariant  subspaces of  $C(\mathcal{U}_n)$,  which are both closed in  $L_2(\mathcal U_n)$.
Then, if $S$ is strongly accessible,  every operator $T~:~H\to  S$ that commutes with the action of $\mathcal U_n \times \mathcal U_n$
on $C(\mathcal{U}_n)$, is a scalar multiple of   
$\pi_{S}\restrict{H}$.

Moreover, if $H$ is  orthogonal to $S$ and $Q$ is  a projection on $H \bigoplus S$ onto $S$ that commutes with the action of $\mathcal U_n \times \mathcal U_n$
on $C(\mathcal{U}_n)$,  then $Q=\pi_{S}\restrict{H \bigoplus S}.$
\end{lemma}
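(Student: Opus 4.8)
The plan is to exploit the reproducing kernels $K_U^H, K_U^S$ from Lemma~\ref{theorem kernel 12.2.5}, together with the fact (Remark~\ref{properties}) that $\mathrm{t}_H = K_{Id}^H$ and $\mathrm{t}_S = K_{Id}^S$ are class functions. Writing the action as $(\rho_{(U,V)}f)(W) = f(UWV)$, part~(iii) of that lemma gives $K_U^H = \mathrm{t}_H\circ R_{U^{-1}} = \rho_{(Id,U^{-1})}\mathrm{t}_H$ and likewise $K_U^S = \rho_{(Id,U^{-1})}\mathrm{t}_S$; moreover the family $\{K_U^H : U\in\mathcal U_n\}$ spans a dense subspace of $H$, since any $f\in H$ with $f\perp K_U^H$ for all $U$ satisfies $f(U) = \langle f,K_U^H\rangle_{L_2} = 0$. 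Thus any $T$ commuting with the action will be completely determined by the single value $T\mathrm{t}_H$.

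First I would compute $T\mathrm{t}_H$. Conjugation is realized by the diagonal elements $\rho_{(V,V^{*})}$, and since $\mathrm{t}_H$ is a class function we have $\rho_{(V,V^{*})}\mathrm{t}_H = \mathrm{t}_H$. Because $T$ commutes with the action, $\rho_{(V,V^{*})}(T\mathrm{t}_H) = T(\rho_{(V,V^{*})}\mathrm{t}_H) = T\mathrm{t}_H$ for every $V$, so $T\mathrm{t}_H$ is a class function lying in $S$. By strong accessibility of $S$ there is a scalar $c$ with $T\mathrm{t}_H = c\,\mathrm{t}_S$. Applying $\rho_{(Id,U^{-1})}$ and commutativity then yields $T K_U^H = c\,K_U^S$ for every $U$. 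The orthogonal projection $\pi_S$ also commutes with the action (Theorem~\ref{newperspective}(ii)), and $\pi_S\mathrm{t}_H$ is again a class function in $S$, so $\pi_S\mathrm{t}_H = c'\,\mathrm{t}_S$ and $\pi_S K_U^H = c'\,K_U^S$ for all $U$. Consequently $c'\,T = c\,\pi_S\restrict{H}$ on the dense span of the $K_U^H$, hence as bounded operators. When $c'\neq 0$ this already gives $T = (c/c')\,\pi_S\restrict{H}$, the desired scalar multiple.

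The main obstacle is the remaining case $c' = 0$, i.e.\ $\pi_S\restrict{H} = 0$, equivalently $H\perp S$, where I must show $T = 0$. Since $L_U$ and $R_V$ preserve the Haar measure, the action is unitary on $L_2(\mathcal U_n)$, so the adjoint of the $L_2$-extension $\pi_S T\pi_H$ again commutes with the action, whence $T T^{*}\colon S\to S$ commutes with the action. Applying the already-established case of the first assertion with domain and target both equal to $S$ — legitimate because $\pi_S\restrict{S} = Id_S\neq 0$ puts us in the regime $c'\neq 0$ — gives $T T^{*} = \lambda\, Id_S$ with $\lambda\ge 0$. If $\lambda = 0$ then $T^{*}=0$ and $T=0$. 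If $\lambda>0$, then $\lambda^{-1/2}T^{*}\colon S\to H$ is an isometry intertwining the action, so its range is an invariant subspace of $H$ that is isomorphic to $S$ as a representation of $\mathcal U_n\times\mathcal U_n$ yet orthogonal to $S$; this contradicts the multiplicity-freeness of the two-sided regular representation of $\mathcal U_n$ (Peter--Weyl), which forces each irreducible type to occur in a single place of $L_2(\mathcal U_n)$. Hence $\lambda=0$ and $T=0$, completing the first assertion.

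Finally, the ``Moreover'' follows at once. If $H\perp S$ and $Q$ is a projection of $H\oplus S$ onto $S$ commuting with the action, then $Q\restrict{S} = Id_S$ while $Q\restrict{H}\colon H\to S$ commutes with the action; by the first assertion $Q\restrict{H} = c\,\pi_S\restrict{H} = 0$, since $\pi_S\restrict{H}=0$. Thus $Q$ is the identity on $S$ and zero on $H$, which is precisely $\pi_{S}\restrict{H\oplus S}$ because $H$ and $S$ are orthogonal. The delicate point throughout is the orthogonal case, where the purely kernel-theoretic identities degenerate and one must invoke the representation-theoretic input on multiplicities.
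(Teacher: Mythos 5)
Your proof is correct, and its first half coincides with the paper's: you show that $T\mathrm{t}_H$ is a class function in $S$ and invoke strong accessibility to obtain $T\mathrm{t}_H=c\,\mathrm{t}_S$, and your ``Moreover'' part is handled exactly as in the paper. Where you genuinely diverge is in passing from $T\mathrm{t}_H=c\,\mathrm{t}_S$ to the conclusion. The paper does this in one line via the reproducing identity of Theorem~\ref{newperspective}: for $h\in H$ one has $h=\pi_H h=h\ast\mathrm{t}_H$, hence $Th=h\ast T\mathrm{t}_H=c\,(h\ast\mathrm{t}_S)=c\,\pi_S h$, an identity valid with the \emph{same} constant $c$ in every case, including $H\perp S$, where it immediately yields $T=0$. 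You instead compare $T$ with $\pi_S$ through the ratio of two constants $c,c'$, which degenerates precisely when $\pi_S\restrict{H}=0$, and you then repair that case by a different argument: positivity of $TT^{*}$ together with multiplicity-freeness of the two-sided regular representation (Peter--Weyl). That patch is sound (granting, as the paper also implicitly does, that $T$ is $L_2$-bounded, so that adjoints exist and the identity extends from the dense span of the kernels), but it imports representation-theoretic machinery that the paper's convolution argument avoids, and it buys nothing extra here. Note that you already hold the elementary exit: from $TK_U^H=c\,K_U^S$ and $h=h\ast\mathrm{t}_H=\int_{\mathcal U_n}h(V)\,K_V^H\,dV$ one gets directly $Th=c\int_{\mathcal U_n}h(V)\,K_V^S\,dV=c\,(h\ast\mathrm{t}_S)=c\,\pi_S h$, which removes both the case split and the appeal to Peter--Weyl.
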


\begin{proof}
By the assumption on $T$
 and Lemma~\ref{theorem kernel 12.2.5}, (iii) for every $V\in\mathcal U_n$,
\[
\big(C_{L_V}\circ C_{R_{V^{-1}}}\big) (T  \mathrm{t}_H)=T\big( (C_{L_V}\circ C_{R_{V^{-1}}}) \mathrm{t}_H\big) =T t_H\,.
\]
This implies that  $(T \mathrm{t}_H)(V^{\ast}UV)= (T\mathrm{t}_H)(U)$ for all $U,V\in\mathcal U_n$.
Since $S$ is strongly accessible, we have that $T  \mathrm{t}_H = \gamma   \mathrm{t}_S$ for some $\gamma  \in \mathbb{C}$. But from Theorem~\ref{newperspective} we know that for all $h \in H$
\[
h= \pi_H h = h \ast \mathrm{t}_H\,,
\]
and hence
\[
Th  = h \ast T\mathrm{t}_H = \gamma  h \ast \mathrm{t}_S = \gamma  \pi_S h\,.
\]
 To see the second assertion, note  that by  the first part of the lemma  we have $Q\restrict{H}=\gamma \,\pi_S\restrict {H}$ for some
$\gamma  \in \mathbb{C}$. But since by assumption $H\subset S^\perp$, this implies $Q\restrict{H}= 0=\pi_{S}\restrict {H}$.
 On the other hand,  since $Q$ is a~projection onto $S$, we see that  $Q\restrict{S}=Id_{S}=\pi_S\restrict {S}$, which finishes
 the proof.
\end{proof}

We now give a
\begin{proof}[Proof of Proposition \ref{prop 12.2.7 for C(U)}]
Let $Q$  be a projection on $C(\mathcal U_n)$ onto $S$, which  commutes with the action of
$\mathcal U_n \times \mathcal U_n $
on $C(\mathcal U_n)$.
By  Theorem~\ref{den-matrix}, it suffices to show that for each pair
$(p,q)~\in~\mathbb{N}_0~\times~\mathbb{N}_0 $
\[
Q\restrict{\mathfrak H_{(p,q)}} = \pi_S\restrict{\mathfrak H_{(p,q)}}\,.
\]
 Given  such pair $(p,q)$, we define the subspace
\[
 H := \big\{f-\pi_Sf\,:\,\,f\in \mathfrak H_{(p,q)}\big\} \subset C(\mathcal U_n).
\]
Then $H$ is  $\mathcal U_n$-invariant; indeed, by Theorem~\ref{newperspective}, (ii)
and the fact that $\mathfrak H_{(p,q)}$ is $\mathcal U_n$-invariant (proved in Lemma~\ref{ludo200}), for every   $f\in{\mathfrak H_{(p,q)}}$
and $U \in \mathcal U_n$ we have
\[
(f-\pi_S f)\circ L_U=f\circ L_U -\pi_S f\circ L_U= f\circ L_U -\pi_S(f\circ L_U)\in H\,,
\]
and the  invariance under right multiplication follows similarly.
 Since $H\perp S$ and $Q$ commutes with the action of $\mathcal U_n \times \mathcal U_n$
 on $C(\mathcal U_n)$,
 Lemma~\ref{prop 12.2.7 for L2(U)} (the second part applied to the restriction  of $Q$ to $H \oplus S$) shows  that
 $$Q\restrict{H \oplus S}=  \pi_S\restrict{H \oplus S}\,,$$
  so  in particular
 $
     Q\restrict{H}=  \pi_S\restrict{H} = 0\,.
     $
But then for every $f \in \mathfrak{H}_{(p,q)}(\mathcal U_n)$
\begin{equation*}
   Q(f) = Q(f-\pi_S f)+ Q(\pi_Sf) =   \pi_Sf\,,
  \end{equation*}
  which completes the argument.
  \end{proof}

\subsection{The special case $S= \mathfrak{H}_{(1,0)}(\mathcal{U}_n)$} \label{special}
Recall from Section~\ref{Unitarily invariant subspaces} the definition of the $\mathcal{U}_n$-invariant subspace $\mathfrak{H}_{(1,0)}(\mathcal{U}_n)$
of $C(\mathcal{U}_n)$ of all polynomials $f \in C(\mathcal{U}_n)$ of the form
\[
f(U) = \sum_{1 \leq i,j \leq n} c_{i,j} u_{i,j}\,,
\]
where $U = (u_{i,j})_{1 \leq i,j \leq n}\in \mathcal{U}_n$\,.

In Theorem~\ref{newperspective} we showed that the orthogonal projection
$\pi_{(1,0)} = \pi_{\mathfrak{H}_{(1,0)}(\mathcal{U}_n)}$ on $L_2(\mathcal{U}_n)$ onto $\mathfrak{H}_{(1,0)}(\mathcal{U}_n)$
is a convolution operator with respect to the kernel $t_{(1,0)} = t_{\mathfrak{H}_{(1,0)}(\mathcal{U}_n)}$. We need an alternative description of this projection in terms of the canonical orthonormal basis of $\mathfrak{H}_{(1,0)}(\mathcal{U}_n)$.

By~\eqref{intformA}  the collection  of all  normalized functions $\sqrt{n}\,e_{ij}, \, 1 \leq i,j \leq n$ forms an orthonormal
system in $L_2(\mathcal{U}_n)$, and hence an orthonormal basis of  $\mathfrak H_{(1,0)}(\mathcal U_n)$
considered as a subspace of $L_2(\mathcal U_n)$.
Consequently,   for each $f\in L_2(\mathcal U_n)$
\begin{equation}\label{orthonormalrep}
  \pi_{(1,0)}(f)=\sum_{1 \leq i,j \leq n}\big\langle f, \sqrt n e_{ij} \big\rangle_{L_2}\, \sqrt n e_{ij}= n \sum_{1 \leq i,j \leq n}\big\langle f,  e_{ij} \big\rangle_{L_2}\,\, e_{ij}\,,
\end{equation}
where
$e_{ij} \in \mathfrak H_{(1,0)}(\mathcal U_n)$
is defined by
$e_{ij}(U) = u_{i,j}$ for $U \in \mathcal{U}_n$\,.

Comparing the  two
representations of $\pi_{(1,0)}$  we now have, leads to the following

\begin{proposition} \label{specialI}
For each  $n \in \mathbb{N}$ we  have  $\mathrm{t}_{(1,0)} = n \,\mathrm{tr}$, and moreover
\[
\text{$\pi_{(1,0)}f = n \,\,(f \ast \mathrm{tr})$\,,\quad\,  $f\in L_2(\mathcal{U}_n)$}
\]
and
\[
\text{$\|\pi_{(1,0)}: C(\mathcal{U}_n) \to \mathfrak{H}_{(1,0)}(\mathcal{U}_n )\|  = n\int_{\mathcal{U}_n }|\mathrm{tr}(V)| dV $\,.}
\]
\end{proposition}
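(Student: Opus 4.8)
The plan is to pin down the convolution kernel $\mathrm{t}_{(1,0)} = K^S_{Id}$ of the orthogonal projection onto $S = \mathfrak{H}_{(1,0)}(\mathcal{U}_n)$ explicitly, and then read off the two remaining assertions as direct substitutions. The crucial observation is that we now possess \emph{two} descriptions of $\pi_{(1,0)}$: the convolution formula $\pi_{(1,0)} f = f \ast \mathrm{t}_{(1,0)}$ furnished by Theorem~\ref{newperspective}(i), and the orthonormal-basis expansion~\eqref{orthonormalrep}. Matching these two will identify $\mathrm{t}_{(1,0)}$, after which Theorem~\ref{newperspective}(i) and (iii) immediately yield the convolution and norm formulas.

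The key step is to evaluate $\pi_{(1,0)} f$ at the identity matrix $Id$; this is legitimate because $\pi_{(1,0)} f$ lies in $S \subset C(\mathcal{U}_n)$ and is thus a genuine continuous function. On one side, combining the definition $\mathrm{t}_{(1,0)} = K^S_{Id}$ from~\eqref{new} with Lemma~\ref{theorem kernel 12.2.5}(i) gives
\[
(\pi_{(1,0)} f)(Id) = \langle f, \mathrm{t}_{(1,0)}\rangle_{L_2}.
\]
On the other side, inserting $e_{ij}(Id) = \delta_{ij}$ into~\eqref{orthonormalrep} and using $\mathrm{tr} = \sum_{i=1}^n e_{ii}$ gives
\[
(\pi_{(1,0)} f)(Id) = n \sum_{1 \le i,j \le n} \langle f, e_{ij}\rangle_{L_2}\, \delta_{ij} = n \sum_{i=1}^n \langle f, e_{ii}\rangle_{L_2} = \langle f, n\,\mathrm{tr}\rangle_{L_2}.
\]
Since these agree for every $f \in L_2(\mathcal{U}_n)$, the nondegeneracy of the inner product forces $\mathrm{t}_{(1,0)} = n\,\mathrm{tr}$; and this is consistent with $\mathrm{t}_{(1,0)} \in S$, as $n\,\mathrm{tr}$ is the linear combination $\sum_{i} n\, e_{ii}$ of coordinate functions.

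With the kernel in hand, the second assertion is $\pi_{(1,0)} f = f \ast \mathrm{t}_{(1,0)} = n\,(f \ast \mathrm{tr})$ straight from Theorem~\ref{newperspective}(i), and the third is $\|\pi_{(1,0)}\| = \int_{\mathcal{U}_n} |\mathrm{t}_{(1,0)}(V)|\,dV = n \int_{\mathcal{U}_n} |\mathrm{tr}(V)|\,dV$ straight from Theorem~\ref{newperspective}(iii). I do not anticipate a genuine obstacle: the whole content is the identification of the kernel in the evaluation-at-$Id$ step, and everything else is substitution. The only point demanding care is confirming that the two representations of $\pi_{(1,0)}$ may indeed be compared pointwise at $Id$, which holds precisely because the range $S$ consists of continuous functions.
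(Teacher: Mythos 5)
Your proposal is correct and follows exactly the paper's own argument: evaluate $\pi_{(1,0)}f$ at $Id$ using Lemma~\ref{theorem kernel 12.2.5}(i) on one side and \eqref{orthonormalrep} on the other, conclude $\mathrm{t}_{(1,0)} = n\,\mathrm{tr}$ by nondegeneracy, and read off the remaining claims from Theorem~\ref{newperspective}.
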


\begin{proof}
  To check the equality $\mathrm{t}_{(1,0)} = n \,\mathrm{tr}$, recall that by Lemma~\ref{theorem kernel 12.2.5},~(i) and
    the definition of $\mathrm{t}_{(1,0)}$ from~\eqref{new},  for all $f\in L_2(\mathcal{U}_n)$ one gets
    \[
(\pi_{(1,0)}f)(Id)=\big\langle f, \mathrm{t}_{(1,0)} \big\rangle_{L_2}\,.
\]
On the other hand, by~\eqref{orthonormalrep} for all $f\in L_2(\mathcal{U}_n)$,
\begin{align*}
  (\pi_{(1,0)}f)(Id)
    = n \sum_{ i,j}\big\langle f,  e_{ij} \big\rangle_{L_2}\,\, e_{ij}(Id)
    =
    n \sum_{ i}\big\langle f,  e_{ii} \big\rangle_{L_2} = n \big\langle f,   \mathrm{tr} \big\rangle_{L_2}\, \,,
  \end{align*}
  which together with the preceding equality is  what we were looking for. To deduce the second and third claim, is then immediate from  of Theorem~\ref{newperspective}, $(iii)$.
\end{proof}

\begin{proposition}\label{specialII}
$\mathfrak{H}_{(1,0)}(\mathcal{U}_n)$ is a strongly  accessible   $\mathcal U_n$-invariant subspace of  $C(\mathcal{U}_n)$.
\end{proposition}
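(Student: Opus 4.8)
The plan is to focus entirely on \emph{strong accessibility}, since the remaining claims are already at hand: that $\mathfrak{H}_{(1,0)}(\mathcal{U}_n)$ is $\mathcal{U}_n$-invariant is exactly Lemma~\ref{ludo200} specialized to $(p,q)=(1,0)$, and its closedness in $L_2(\mathcal U_n)$ is automatic because it is finite dimensional, being spanned by the $n^2$ coordinate functions $e_{ij}$. By Proposition~\ref{specialI} we have $\mathrm{t}_{(1,0)} = n\,\mathrm{tr}$, so the task reduces to showing that every class function lying in $\mathfrak{H}_{(1,0)}(\mathcal{U}_n)$ is a scalar multiple of the trace.

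First I would encode the elements of $\mathfrak{H}_{(1,0)}(\mathcal{U}_n)$ by matrices. Every such $f$ has the form $f(U)=\sum_{i,j}c_{ij}u_{ij}$, and putting $B=(b_{ij})$ with $b_{ij}=c_{ji}$ one checks directly that $f(U)=\mathrm{tr}(BU)$ for all $U\in\mathcal U_n$. The key observation, which I would record separately, is that the assignment $B\mapsto\big[U\mapsto\mathrm{tr}(BU)\big]$ is injective on $M_n$: by \eqref{intformA} the coordinate functions $e_{ij}$ are pairwise orthogonal, hence linearly independent, in $L_2(\mathcal U_n)$, so $\mathrm{tr}(BU)=0$ for all $U\in\mathcal U_n$ forces every entry of $B$ to vanish.

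Next I would translate the class-function condition. Suppose $f(U)=\mathrm{tr}(BU)$ satisfies $f(VUV^{*})=f(U)$ for all $U,V\in\mathcal U_n$. Using $\mathrm{tr}(BVUV^{*})=\mathrm{tr}(V^{*}BV\,U)$, this says $\mathrm{tr}\big((V^{*}BV-B)U\big)=0$ for every $U\in\mathcal U_n$, and the injectivity just noted (applied to the matrix $V^{*}BV-B$) yields $V^{*}BV=B$, i.e. $BV=VB$ for all $V\in\mathcal U_n$. Since the unitaries span $M_n$ (each Hermitian matrix of norm at most one is $\tfrac12(U+U^{*})$ for a suitable unitary $U$, and a general matrix is a combination of two Hermitian ones), $B$ commutes with all of $M_n$ and is therefore central, so $B=\lambda\,\Id$ for some $\lambda\in\mathbb C$. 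Hence $f(U)=\lambda\,\mathrm{tr}(U)$, and $f=(\lambda/n)\,\mathrm{t}_{(1,0)}$ is the desired scalar multiple of $\mathrm{t}_{(1,0)}$; this is precisely strong accessibility (note that Proposition~\ref{prop 12.2.7 for C(U)} is not needed here, although it converts this statement into ordinary accessibility).

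The argument is elementary throughout; the only places demanding care are the injectivity of $B\mapsto\mathrm{tr}(B\,\cdot)$ on $\mathcal U_n$, which rests on \eqref{intformA}, and the companion algebraic fact that a matrix commuting with every unitary must be scalar (a Schur-type statement), equivalently that the unitaries span $M_n$. So I expect the main conceptual step to be recognizing the bijective matrix parametrization $f\leftrightarrow B$ and exploiting the orthogonality relation \eqref{intformA}; everything else is a direct computation.
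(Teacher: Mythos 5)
Your proposal is correct and follows essentially the same route as the paper: both identify $f\in\mathfrak H_{(1,0)}(\mathcal U_n)$ with a matrix via $f(U)=\mathrm{tr}(BU)$, translate the class-function condition into $\mathrm{tr}\big((V^{*}BV-B)U\big)=0$ for all unitaries $U$, use that the unitaries span $M_n$ (the paper invokes this directly where you invoke the orthogonality relation \eqref{intformA} for injectivity, but this is the same fact in a different guise) to conclude $B$ commutes with every unitary, and finish with the Schur-type observation that a central matrix is scalar. The additional remarks on $\mathcal U_n$-invariance, closedness, and the normalization $\mathrm{t}_{(1,0)}=n\,\mathrm{tr}$ are accurate and consistent with the paper.
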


\begin{proof}
 Take
$f =\sum_{1 \leq i,j \leq n} c_{i,j} e_{i,j}\in \mathfrak H_{(1,0)}(\mathcal U_n)$
such that $f(V^{-1}UV)=f(U)$ for every $U,V\in\mathcal U_n$.  Clearly,  $f$ can be considered  as a linear functional
on $M_n(\mathbb{C})$. This implies that there exists $A\in M_n(\mathbb{C})$ such that $f(U)=\mathrm{tr}(AU)$ for all $U\in M_n(\mathbb{C})$.
Then  from the assumption on $f$,  it follows that for all $U,V\in\mathcal U_n$
\[
\mathrm{tr}(AU)= f(U)= f(V^{-1}UV) =\mathrm{tr}(AV^{-1}UV)=\mathrm{tr}(VAV^{-1}U).
\]
Combining this with the fact that any  matrix in $M_n(\mathbb{C})$ is a linear combination of unitary matrices, we deduce that $A=VAV^{-1}$ for every $V\in\mathcal U_n$, and so $A$ commutes with all matrices in $M_n(\mathbb{C})$. This  implies that $A=\gamma  Id$ for some
$\gamma \in \mathbb C$, and hence  as desired   $f=\gamma \,\mathrm{tr}$.
\end{proof}

A comment is in order: if $p+q>1$ then  $g_1(A):=\mathrm{tr}(A^p(A^*)^q) $ and $g_2(A):= \mathrm{tr}(A)^p\mathrm{tr}(A^*)^q$  are different class functions. Thus, in this case, $\mathfrak H_{(p,q)}(\mathcal U_n)$ is not strongly accessible.

\smallskip
The following result identifies $\mathfrak H_{(1,0)}(\mathcal U_n)$ with the trace class $\mathcal S_1(n)$.

\begin{proposition}\label{linco}
 The space $\mathfrak H_{(1,0)}(\mathcal U_n)$ is isometrically isomorphic to $\mathcal S_1(n)$. More precisely,
 \begin{equation}\label{eq: S1 to H(1,0)}
 \mathcal S_1(n) \to \mathfrak H_{(1,0)}(\mathcal U_n)\,, \,\,\,\,\, A \mapsto [f:U \mapsto \mathrm{tr}(AU)]
 \end{equation}
 is an isometry onto.
\end{proposition}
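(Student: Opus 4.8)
The plan is to verify, for the map $\Phi \colon A \mapsto [U \mapsto \mathrm{tr}(AU)]$ in \eqref{eq: S1 to H(1,0)}, the three properties defining an isometric isomorphism: that $\Phi$ is a well-defined linear map into $\mathfrak H_{(1,0)}(\mathcal U_n)$, that it is onto, and that it is norm-preserving.

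First I would record the coordinate expression of $\Phi(A)$. For $A = (a_{ij}) \in M_n$ and $U = (u_{ij}) \in \mathcal U_n$,
\[
\mathrm{tr}(AU) = \sum_{i=1}^n (AU)_{ii} = \sum_{i,j=1}^n a_{ij}\, u_{ji} = \sum_{k,\ell=1}^n a_{\ell k}\, e_{k\ell}(U),
\]
which displays $\Phi(A)$ as a function $\sum_{k,\ell} c_{k\ell}\, e_{k\ell}$ with $c_{k\ell} = a_{\ell k}$; hence $\Phi(A) \in \mathfrak H_{(1,0)}(\mathcal U_n)$, and $\Phi$ is plainly linear. The assignment $A \mapsto (a_{\ell k})_{k,\ell}$ is a linear bijection of $M_n$ onto the full space of coefficient matrices, so to conclude that $\Phi$ maps onto $\mathfrak H_{(1,0)}(\mathcal U_n)$ (and is injective) it remains only to know that distinct coefficient matrices determine distinct functions on $\mathcal U_n$, i.e.\ that the coordinate functions $(e_{k\ell})_{k,\ell}$ are linearly independent in $C(\mathcal U_n)$. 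This is immediate from the orthogonality relations \eqref{intformA}, which exhibit the $e_{k\ell}$ as an orthogonal system in $L_2(\mathcal U_n)$.

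The heart of the statement is the isometry $\|A\|_1 = \|\Phi(A)\|_{C(\mathcal U_n)} = \sup_{U \in \mathcal U_n} |\mathrm{tr}(AU)|$. For the inequality ``$\leq$'' I would appeal to trace duality \eqref{dualitytr}: it gives $|\mathrm{tr}(AU)| \le \|A\|_1\,\|U\|_\infty$, and since every unitary satisfies $\|U\|_\infty = 1$, taking the supremum yields $\sup_U |\mathrm{tr}(AU)| \le \|A\|_1$. For the reverse inequality I would use the singular value decomposition: writing $A = U_1 \Sigma U_2^\ast$ with $U_1, U_2 \in \mathcal U_n$ and $\Sigma = \mathrm{diag}\big(s_1(A), \dots, s_n(A)\big)$, the unitary $U := U_2 U_1^\ast$ satisfies
\[
\mathrm{tr}(AU) = \mathrm{tr}\big(U_1 \Sigma U_2^\ast U_2 U_1^\ast\big) = \mathrm{tr}(\Sigma) = \sum_{k=1}^n s_k(A) = \|A\|_1,
\]
so the supremum is attained and equals $\|A\|_1$.

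The only substantive point is this last step: it says that the unitary group is a \emph{norming set} for the trace class, and the singular value decomposition is exactly the device that produces, for each $A$, a unitary \emph{aligned} with $A$ so that $\mathrm{tr}(AU)$ recovers the entire sum of singular values. Everything else --- well-definedness, linearity, surjectivity, and the easy half of the norm identity --- is routine bookkeeping resting on \eqref{intformA} and the Schatten--H\"older inequality contained in \eqref{dualitytr}.
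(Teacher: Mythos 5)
Your proof is correct, and it takes a genuinely different route from the paper at the one substantive point, namely the identity $\|A\|_1=\sup_{U\in\mathcal U_n}|\mathrm{tr}(AU)|$. The paper obtains the hard inequality by invoking Nelson's theorem on the distinguished boundary of the unit operator ball (via \cite{nelson1961distinguished}, see also \cite{harris1997holomorphic}): for any function continuous on the closed and analytic on the open unit ball of $\mathcal L(\ell_2^n)$, the supremum over the ball equals the supremum over $\mathcal U_n$; combined with trace duality \eqref{dualitytr} this gives the claim at once. You instead produce the extremal unitary explicitly from the singular value decomposition $A=U_1\Sigma U_2^\ast$, checking that $U=U_2U_1^\ast$ attains $\mathrm{tr}(AU)=\sum_k s_k(A)$. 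Your argument is more elementary and self-contained --- it uses only the SVD and the easy half of trace duality, whereas Nelson's result is a stronger statement about all analytic functions that is overkill for a linear functional. What the paper's route buys is brevity and a pointer to the general principle that $\mathcal U_n$ is the distinguished boundary of the operator ball, which is conceptually aligned with the rest of the paper's harmonic-analysis-on-$\mathcal U_n$ viewpoint. The remaining bookkeeping also differs mildly but immaterially: the paper identifies $\mathfrak H_{(1,0)}(\mathcal U_n)$ with the algebraic dual $M_n(\mathbb C)^{\times}$ to get bijectivity, while you deduce injectivity from the orthogonality relations \eqref{intformA}; both are sound.
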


\begin{proof}
Obviously, the mapping in~\eqref{eq: S1 to H(1,0)} is a linear bijection. Indeed, as a linear space $\mathcal S_1(n)$ equals $M_n(\mathbb{C})$, and
$\mathfrak H_{(1,0)}(\mathcal U_n)$ equals the algebraic dual $M_n(\mathbb{C})^{\times}$ of $M_n(\mathbb{C})$. Moreover, it  is well-known that the
mapping $A \mapsto [f:U \mapsto \mathrm{tr}(AU)]$ identifies $M_n(\mathbb{C})$ and $M_n(\mathbb{C})^{\times}$. So it remains to prove
that the mapping in~\eqref{eq: S1 to H(1,0)}  is isometric.
 To prove  that, we use a result
of Nelson \cite{nelson1961distinguished} (see also \cite[Theorem 1]{harris1997holomorphic}) showing that for any
complex-valued   function $f$, which is continuous on the closed  and  analytic
on the open unit ball of ${\mathcal L(\ell_2^n)}$, we have
$
    \sup_{\|T\| \leq  1  }|f(T)|=\sup_{U\in\mathcal U_n}|f(U)| \,.
$
But then  by~\eqref{dualitytr} for every $A \in \mathcal S_1(n)$
\[
\|A\|_1 = \sup_{\|T\| \leq 1  }|\mathrm{tr}(AT)| =\sup_{U\in\mathcal U_n} |\mathrm{tr}(AU)|\,,
\]
completing the argument.
\end{proof}

\subsection{Proof of the main result} \label{proof of the main result}
We start by presenting the

\begin{proof}[Proof of the integral formula from~\eqref{intgralformula}]
We use the identification from  Proposition~\ref{linco}, and combine  it with Proposition~\ref{specialII} and  Theorem~\ref{abstract}.
Then  Proposition~\ref{specialI} completes  the argument.
  \end{proof}

Now we deal with the limit formula from  \eqref{cor: lim proj constant trace}. For this we  need to recall  some well-known results from probability theory (for more on this see \cite{billingsley2013convergence}). We
are going to  use that, given any sequence $(Y_n)$ of random variables,  which converges in distribution to the  random variable $Y$,  and any continuous real-valued function $f$,
the sequence $\big(f(Y_n)\big)$ converges in distribution to $f(Y)$.
Recall also that a~sequence $(Y_n)_{n }$ of random variables  is said to be  uniformly integrable whenever $$\lim_{a \to  \infty} \sup_{n \ge 1} \int_{|Y_n| \ge a } |Y_n| d P = 0\,.$$
Uniform integrability will be useful for us due to the fact (see for example \cite[Theorem 3.5]{billingsley2013convergence})
that  if $(Y_n)_{n}$ is a uniformly integrable sequence of random variables and $Y_n \overset{D}{\longrightarrow} Y$, then $Y$ is integrable and \begin{align}\label{thm: unif int + conv in dist implies conv in mean}
\mathbb{E} (Y_n) \to \mathbb{E}(Y)\,.
\end{align}

To check uniform integrability we cite a standard criterion.

\begin{remark}\label{rem: bounded moment implies unif int}
If  $\sup_{n} \mathbb{E}( |Y_n|^{1+\varepsilon}) \le C$ for some $\varepsilon, C>0$, then $(Y_n)_{n}$ is uniformly integrable; indeed,
this is a consequence of
\[
\lim_{a \to  \infty} \sup_{n \ge 1} \int_{|Y_n| \ge a } |Y_n| d P \le \lim_{a \to  \infty} \frac{1}{a^\varepsilon} C\,.
\]
\end{remark}

We are now ready to provide the

\begin{proof}[Proof of the limit formula from \eqref{cor: lim proj constant trace}]

Consider the sequence $\big(\mathrm{tr}(U(n))\big)$  of random variables on $\mathcal{U}_n$, where $U(n)$ is a unitary matrix uniformly Haar distributed. Then, by \cite[Corollary 2.4]{johansson1997random} (see also \cite{diaconis1994eigenvalues} or \cite[Problem 8.5.5]{pastur2011eigenvalue}), the previous sequence converges in distribution to the standard Gaussian complex random variable
$\pmb{\gamma}$. Indeed,
 the random variables
 $\sqrt{2} Re[\mathrm{tr}(U(n))]$ and $\sqrt{2} Im[\mathrm{tr}(U(n))]$
 converge in distribution to a standard real Gaussian random variable.

Thus, the sequence $\big(\sqrt{2}|\,\mathrm{tr}(U(n))|\big)$ of
random variables on $\mathcal{U}_n$ converges in distribution to a Rayleigh random variable.
Moreover, since as mentioned in~\eqref{intformB}, for each~$n$
\[
\mathbb{E}\big(|\mathrm{tr}(U(n))|^2\big)=
\int_{\mathcal U_n} |\mathrm{tr}(V)|^2\,dV  =  1\,,
\]
  the sequence of random variables  $\mathrm{tr}(U(n))$ by  Remark \ref{rem: bounded moment
implies unif int} is  uniformly integrable . Consequently,  we deduce from  \eqref{thm: unif int + conv in dist implies conv in mean}
that $\big(\mathbb{E}(\sqrt{2}| \,\mathrm{tr}(U(n))|)\big)$ converges to the expectation of a Rayleigh random variable. That is,
\[
\lim_{n\to \infty}\, \mathbb{E}\big(\sqrt{2}\,|\mathrm{tr}(U(n))|\big)\to\sqrt{\frac{\pi}{2}}\,.
\]
Using~\eqref{intgralformula}, we arrive at
\[
\lim_{n\to \infty} \frac{1}{n}\boldsymbol{\lambda}(\mathcal S_1(n)))=
\frac{1}{\sqrt 2}\,\lim_{n\to \infty} \mathbb E\big(\sqrt{2}\,|\mathrm{tr}(U(n))|\big) = \frac{\sqrt{\pi}}{2}\,,
\]
which completes the proof.
\end{proof}

\subsection{Another examples}
In this final subsection  we give some other examples where the theory developed to reach our main objective (Theorem~\ref{mainU}) could be applied.

The first result  shows that examples of accessible $\mathcal U_n$-invariant subspaces come in pairs. To see this we define the
linear and isometric bijection
\[
\phi: C(\mathcal{U}_n) \to C(\mathcal{U}_n), \,\,f \mapsto [U \mapsto f(U^\ast)]\,.
\]
For any subspace $S$ in $C(\mathcal{U}_n)$, we write $S_\ast: = \phi S$. As a first  example we mention that isometrically
\[
\big(\mathfrak{H}_{(1,0)}(\mathcal{U}_n)\big)_\ast=\phi \big(\mathfrak{H}_{(1,0)}(\mathcal{U}_n)\big) = \mathfrak{H}_{(0,1)}(\mathcal{U}_n)\,.
\]

\begin{proposition}  \label{relation with the conjungate space}
 Let $S$ be a $\mathcal U_n$-invariant  subspace of  $C(\mathcal{U}_n)$,  which is closed in  $L_2(\mathcal U_n)$. Then~$S_\ast$ is
 $\mathcal U_n$-invariant and
 $
 \mathrm{t}_{S_\ast} = \overline{\mathrm{t}_{S}}\,.
 $
  Moreover, $S$ is strongly accessible $($resp., accesible$)$ if and only if $S_\ast$ is  strongly accessible $($resp., accessible$)$.
  \end{proposition}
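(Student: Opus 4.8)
The plan is to verify the three assertions in order, each of which reduces to unwinding the definition of the involution $\phi(f)(U)=f(U^{\ast})$ and tracking how it interacts with the left/right actions and with the kernels. The central observation is that $\phi$ is a linear isometric bijection of $C(\mathcal U_n)$ that intertwines left and right multiplication in a controlled way, namely $\phi(f\circ L_U)=\phi(f)\circ R_{U^{\ast}}$ and $\phi(f\circ R_U)=\phi(f)\circ L_{U^{\ast}}$, which follow directly since $(U_1 Z)^{\ast}=Z^{\ast}U_1^{\ast}$ and $(Z U_1)^{\ast}=U_1^{\ast}Z^{\ast}$.

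First I would establish $\mathcal U_n$-invariance of $S_{\ast}=\phi S$. Given $g=\phi(f)\in S_{\ast}$ with $f\in S$, the intertwining identities give $g\circ L_V=\phi(f\circ R_{V^{\ast}})$ and $g\circ R_V=\phi(f\circ L_{V^{\ast}})$; since $S$ is $\mathcal U_n$-invariant, $f\circ R_{V^{\ast}}$ and $f\circ L_{V^{\ast}}$ lie in $S$, hence $g\circ L_V,\,g\circ R_V\in\phi S=S_{\ast}$. One must also note that $\phi$ preserves the property of being closed in $L_2(\mathcal U_n)$: since $Z\mapsto Z^{\ast}$ is unitary on $\ell_2^{n^2}$, the map $\phi$ is an $L_2$-isometry, so $S_{\ast}$ is again closed in $L_2$.

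Next I would identify the kernel $\mathrm t_{S_\ast}$. The cleanest route is via the reproducing-kernel characterization of Lemma~\ref{theorem kernel 12.2.5}(i) together with $\mathrm t_S=K^S_{Id}$ from~\eqref{new}. For $f\in L_2(\mathcal U_n)$ one computes
\[
(\pi_{S_\ast}\phi f)(U)=\overline{(\pi_S f)(U^{\ast})}\,;
\]
to justify this I would check that $\phi\circ\pi_S\circ\phi=\pi_{S_\ast}$ as orthogonal projections, using that $\phi$ is an $L_2$-isometry carrying $S$ onto $S_{\ast}$ (so it carries $\pi_S$ to $\pi_{\phi S}$). Evaluating at $U=Id$ and applying Theorem~\ref{newperspective}(i), which gives $\pi_S f=f\ast\mathrm t_S$, one reads off $\mathrm t_{S_\ast}=\overline{\mathrm t_S}$; here the first bullet of Remark~\ref{properties}, $\mathrm t_S(V^{\ast})=\overline{\mathrm t_S(V)}$, does the bookkeeping between conjugation and the adjoint argument.

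Finally I would transfer accessibility. The key is that $\phi$ converts the conjugation class-function condition into itself: $f(VUV^{\ast})=f(U)$ holds for all $U,V$ if and only if $\phi(f)(VUV^{\ast})=\phi(f)(U)$ holds for all $U,V$, because $(VUV^{\ast})^{\ast}=V U^{\ast} V^{\ast}$. Hence $f$ is a class function in $S$ iff $\phi(f)$ is a class function in $S_{\ast}$, and combined with $\mathrm t_{S_\ast}=\overline{\mathrm t_S}=\phi(\mathrm t_S)$ this shows strong accessibility of $S$ is equivalent to that of $S_{\ast}$. For ordinary accessibility I would conjugate projections: if $Q$ is a projection of $C(\mathcal U_n)$ onto $S_{\ast}$ commuting with the action, then $\phi Q\phi$ is such a projection onto $S$; verifying that $\phi Q\phi$ commutes with the action again uses the intertwining identities, so $Q\mapsto\phi Q\phi$ is a bijection between the relevant sets of commuting projections that sends $\pi_S$ to $\pi_{S_\ast}$, giving the equivalence. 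The main obstacle I anticipate is purely notational discipline: keeping the adjoints, conjugations, and the swap of $L$ and $R$ consistent throughout, since a single misplaced $V$ versus $V^{\ast}$ or a dropped complex conjugate would break the kernel identity $\mathrm t_{S_\ast}=\overline{\mathrm t_S}$ on which the accessibility transfer rests.
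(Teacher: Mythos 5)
Your plan follows the paper's proof step for step: invariance of $S_\ast$ via the intertwining identities $\phi(f\circ L_U)=\phi(f)\circ R_{U^\ast}$, the identity $\pi_{S_\ast}=\phi\circ\pi_S\circ\phi$ to pin down the kernel, and conjugation of projections by $\phi$ to transfer (strong) accessibility; the only real variant is that you evaluate at $U=Id$ and invoke the reproducing-kernel uniqueness, whereas the paper rewrites the whole convolution (using that $\mathrm t_S$ is a class function). One intermediate formula is wrong, though: since $\phi$ involves no complex conjugation, $(\pi_{S_\ast}\phi f)(U)=(\phi\,\pi_S f)(U)=(\pi_S f)(U^\ast)$ with \emph{no} overline --- as written your right-hand side is conjugate-linear in $f$ while the left-hand side is linear, so it cannot be correct. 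The conjugate in $\mathrm t_{S_\ast}=\overline{\mathrm t_S}$ enters only afterwards, from the inner product and the kernel symmetry: evaluating at $Id$ gives $\langle f,\mathrm t_{S_\ast}\rangle_{L_2}=\langle \phi f,\mathrm t_S\rangle_{L_2}=\int_{\mathcal U_n} f(V^\ast)\,\overline{\mathrm t_S(V)}\,dV=\int_{\mathcal U_n} f(V)\,\overline{\mathrm t_S(V^\ast)}\,dV=\langle f,\overline{\mathrm t_S}\rangle_{L_2}$, using the inversion-invariance of the Haar measure and $\mathrm t_S(V^\ast)=\overline{\mathrm t_S(V)}$. (That same inversion-invariance, rather than unitarity of $Z\mapsto Z^\ast$ on $\ell_2^{n^2}$, is also the correct reason that $\phi$ is an $L_2(\mathcal U_n)$-isometry.) With that correction the remainder of your argument --- class functions correspond under $\phi$, $\phi(\mathrm t_S)=\mathrm t_{S_\ast}$, and $Q\mapsto\phi Q\phi$ is a bijection between the commuting projections onto $S_\ast$ and those onto $S$ --- is exactly the paper's.
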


 \begin{proof}
 Obviously, $S_\ast$ is $\mathcal U_n$-invariant. In order to show that $\mathrm{t}_{S_\ast} = \overline{\mathrm{t}_{S}}$
 note first that $\pi_{S_\ast} = \phi \circ\pi_{S}\circ \phi$. Then for every $f \in L_2(\mathcal U_n)$
 and $U \in \mathcal U_n$, it follows
 by Theorem~\ref{newperspective}
 and Remark~\ref{properties} that
  \begin{align*}
   \big(\pi_{S_\ast}f\big)(U)
   &
   =
   \big((\pi_{S} \phi f)\big)(U^\ast)
    \\&
   = \big(\phi f \ast \mathrm{t}_{S}\big)(U^\ast)
      = \int_{\mathcal U_n} f(V^\ast)\mathrm{t}_{S}(U^\ast V^\ast) dV
   \\&
     = \int_{\mathcal U_n} f(V^\ast)\overline{\mathrm{t}_{S}}(VU) dV
          = \int_{\mathcal U_n} f(V)\overline{\mathrm{t}_{S}}(V^\ast U) dV
          \\&
          = \int_{\mathcal U_n} f(V)\overline{\mathrm{t}_{S}}( U V^\ast) dV = \big(f \ast \overline{\mathrm{t}_{S}}\big)(U)\,,
 \end{align*}
 which by the uniqueness of $\mathrm{t}_{S_\ast}$ leads to the claim. Let us turn to the 'moreover part'. It is immediate
 that strong accessibility of $S$ is equivalent to strong accessibility of $S_\ast$. So let us assume that
 $S$ is  accessible, and show that then $S_\ast$ is accessible. Take any
 projection $Q: C(\mathcal U_n) \to S_\ast$ which commutes with the action of $\mathcal U_n \times \mathcal U_n$
 on  $C(\mathcal U_n)$. Since $\phi \circ C_{L_V} = R_{V^\ast} \circ \phi$ and
 $\phi \circ C_{R_V} = L_{V^\ast} \circ \phi$ for all $V \in\mathcal U_n $, the projection $\phi \circ Q\circ \phi$ onto $S$
 commutes with the action of $\mathcal U_n \times \mathcal U_n$ on  $C(\mathcal U_n)$, and hence by assumption $\phi \circ Q\circ \phi = \pi_S$.
 But then clearly $Q = \phi \circ\pi_{S}\circ \phi = \pi_{S_\ast}$, the conclusion.
       \end{proof}

 Note that, in particular, $\mathfrak{H}_{(0,1)}(\mathcal{U}_n)$ is $\mathcal U_n$-invariant and  accessible,
 and $\mathrm{t}_{(0,1)} = \overline{\mathrm{tr}}$ and therefore
 by Theorem~\ref{abstract}
\begin{equation*}
   \boldsymbol{\lambda}\big(\mathfrak{H}_{(0,1)}(\mathcal{U}_n)\big) = \big\|\pi_{(0,1)}:C(\mathcal U_n) \to  \mathfrak{H}_{(0,1)}(\mathcal{U}_n)\big\| = n \int_{\mathcal{U}_n }|\mathrm{tr} (V)| d V\,.
   \end{equation*}
Also,
\begin{equation*}
  \lim_{n\to \infty} \, \frac{\boldsymbol{\lambda}\big(\mathfrak{H}_{(0,1)}(\mathcal{U}_n)\big)}{n}=\frac{\sqrt \pi}{2}\,.
\end{equation*}
Of course, this is also a simple consequence
of Theorem~\ref{mainU} using that $\phi$
identifies $\mathfrak{H}_{(0,1)}(\mathcal{U}_n)$ and
$\mathfrak{H}_{(1,0)}(\mathcal{U}_n)$ isometrically.

We continue with
another simple stability property of accessible subspaces.

 \begin{proposition}
 Let $S_1$  and $S_2$  be accessible, $\mathcal U_n$-invariant subspaces of $C(\mathcal U_n)$,  which  in  $L_2(\mathcal U_n)$
 are closed and orthogonal. Then, $S_1 \oplus S_2$ is  accessible and $\mathcal U_n$-invariant, and moreover $t_{S_1 \oplus S_2} =  \mathrm{t}_{S_1} +  \mathrm{t}_{S_2}$.
 Consequently,
\begin{equation}\label{integral formula sum}
\boldsymbol{\lambda}(S_1 \oplus S_2) = \big\|\pi_{S_1} + \pi_{S_2}:C(\mathcal U_n) \to S_1 \oplus S_2  \big\|
=
 \int_{\mathcal{U}_n }|\mathrm{t}_{S_1}(V) +  \mathrm{t}_{S_2}(V)| dV
\,.
\end{equation}

 \end{proposition}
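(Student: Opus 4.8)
The plan is to dispatch the structural claims quickly and then concentrate on accessibility, which is the only substantial point. First I would record the easy facts. Since $S_1$ and $S_2$ are $\mathcal U_n$-invariant, for $f=f_1+f_2$ with $f_i\in S_i$ and any $U\in\mathcal U_n$ one has $f\circ L_U=f_1\circ L_U+f_2\circ L_U\in S_1\oplus S_2$, and likewise for $R_U$; hence $S_1\oplus S_2$ is $\mathcal U_n$-invariant. Orthogonality together with closedness of each $S_i$ yields closedness of $S_1\oplus S_2$ in $L_2(\mathcal U_n)$, since by the Pythagorean identity a Cauchy sequence in the sum splits into Cauchy sequences in $S_1$ and $S_2$. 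Because $S_1\perp S_2$, the $L_2$-orthogonal projection onto the sum decomposes as $\pi_{S_1\oplus S_2}=\pi_{S_1}+\pi_{S_2}$. Invoking Theorem~\ref{newperspective}(i) and the linearity of convolution in the kernel, I get $\pi_{S_1\oplus S_2}f=f\ast\mathrm{t}_{S_1}+f\ast\mathrm{t}_{S_2}=f\ast(\mathrm{t}_{S_1}+\mathrm{t}_{S_2})$ for every $f$, and since $\mathrm{t}_{S_1}+\mathrm{t}_{S_2}\in S_1\oplus S_2$, the uniqueness of the convolution kernel gives $\mathrm{t}_{S_1\oplus S_2}=\mathrm{t}_{S_1}+\mathrm{t}_{S_2}$.

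The main step is accessibility. Let $Q\colon C(\mathcal U_n)\to C(\mathcal U_n)$ be any projection onto $S_1\oplus S_2$ commuting with the action of $\mathcal U_n\times\mathcal U_n$, and set $Q_i:=\pi_{S_i}Q$ for $i=1,2$. I claim each $Q_i$ is a bounded projection of $C(\mathcal U_n)$ onto $S_i$ commuting with the action. Boundedness is clear since $\pi_{S_i}$ and $Q$ are bounded, and commutation follows because $\pi_{S_i}$ commutes with the action by Theorem~\ref{newperspective}(ii). The range of $Q_i$ lies in $S_i$, and equals $S_i$ because for $g\in S_i\subset S_1\oplus S_2$ one has $Qg=g$ and then $\pi_{S_i}g=g$. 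For idempotency the key observation is that $Q\pi_{S_i}=\pi_{S_i}$: indeed $\pi_{S_i}f\in S_i\subset S_1\oplus S_2$, and $Q$ is the identity on its range $S_1\oplus S_2$. Hence $Q_i^2=\pi_{S_i}(Q\pi_{S_i})Q=\pi_{S_i}^2Q=\pi_{S_i}Q=Q_i$. Since $S_i$ is accessible, it follows that $Q_i=\pi_{S_i}$ on $C(\mathcal U_n)$. Finally, because $Q$ maps into $S_1\oplus S_2$ we have $Q=\pi_{S_1\oplus S_2}Q=(\pi_{S_1}+\pi_{S_2})Q=Q_1+Q_2=\pi_{S_1}+\pi_{S_2}=\pi_{S_1\oplus S_2}$, which proves that $S_1\oplus S_2$ is accessible.

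With accessibility and $\mathcal U_n$-invariance of $S_1\oplus S_2$ established, the displayed formula~\eqref{integral formula sum} is immediate from Theorem~\ref{abstract}, using $\pi_{S_1\oplus S_2}=\pi_{S_1}+\pi_{S_2}$ and $\mathrm{t}_{S_1\oplus S_2}=\mathrm{t}_{S_1}+\mathrm{t}_{S_2}$. The one genuinely nontrivial point is the idempotency of $Q_i$, which rests entirely on the identity $Q\pi_{S_i}=\pi_{S_i}$; everything else is routine bookkeeping with orthogonal projections. I do not anticipate a real obstacle, though one must take care that each $Q_i$ is a projection on $C(\mathcal U_n)$, and not merely on $L_2(\mathcal U_n)$, so that the hypothesis of accessibility of $S_i$ applies verbatim.
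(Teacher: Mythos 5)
Your proposal is correct and follows essentially the same route as the paper: decompose $\pi_{S_1\oplus S_2}=\pi_{S_1}+\pi_{S_2}$, use linearity of convolution plus uniqueness of the kernel for $\mathrm{t}_{S_1\oplus S_2}=\mathrm{t}_{S_1}+\mathrm{t}_{S_2}$, and for accessibility consider $Q_i=\pi_{S_i}\circ Q$, invoke accessibility of each $S_i$, and sum. Your explicit verification that each $Q_i$ is idempotent (via $Q\pi_{S_i}=\pi_{S_i}$) is a detail the paper leaves implicit, but the argument is the same.
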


\begin{proof}
That $S_1 \oplus S_2$ is $\mathcal U_n$-invariant is straightforward.
Note that $\pi_{S_1 \oplus S_2}  = \pi_{S_1} + \pi_{S_2}$
is the orthogonal projection on $L_2(\mathcal{U}_n)$ onto~$S_1 \oplus S_2$. Then by
Theorem~\ref{newperspective}
for all $f \in L_2(\mathcal{U}_n)$ we have
\[
\pi_{S_1 \oplus S_2}f = \pi_{S_1}f + \pi_{S_2}f = f \ast \mathrm{t}_{S_1} + f \ast \mathrm{t}_{S_2} = f \ast (\mathrm{t}_{S_1} +\mathrm{t}_{S_2})\,.
\]
Hence by the uniqueness of $t_{S_1 \oplus S_2}$ we get
\[
\mathrm{t}_{S_1 \oplus S_2} = \mathrm{t}_{S_1} +\mathrm{t}_{S_2} \,.
\]
Let us now show that $S_1 \oplus S_2$ is accessible. So let $Q$ be a projection on
$C(\mathcal{U}_n)$ onto $S_1 \oplus S_2$ which commutes with the action of $\mathcal{U}_n\times \mathcal{U}_n$ on
$C(\mathcal{U}_n)$. We claim that $Q = \pi_{S_1 \oplus S_2}$. Indeed, consider the two projections
\[
\text{$Q_{S_1} = \pi_{S_1}\circ   Q $ \quad and \quad $Q_{S_2} = \pi_{S_2}\circ   Q $}
\]
on $C(\mathcal{U}_n)$ onto $S_1$ and ${S_2}$, respectively.
Since $\pi_{S_1}$ and $\pi_{S_2}$ both commute with the action of $\mathcal{U}_n\times \mathcal{U}_n$ on
$C(\mathcal{U}_n)$, also $Q_{S_1}$ and $Q_{S_2}$ do. Then by the accessibility of $S_1$ and $S_2$ we see that
\[
\text{$Q_{S_1} = \pi_{S_1}$ \quad and \quad $Q_{S_2} = \pi_{S_2}$}\,,
\]
and hence for all $f \in C(\mathcal{U}_n)$ as desired
\[
Qf = \pi_{S_1}(Qf)  + \pi_{S_2}(Qf) = \pi_{S_1}f + \pi_{S_2}f = \pi_{S_1 \oplus S_2} f\,.
\]
To conclude the proof just note that \eqref{integral formula sum} is then a direct consequence of Theorem~\ref{abstract}.
\end{proof}

Combining the  previous two propositions we obtain

\begin{corollary} \label{coro: sum 1,0 + 0,1}
For each $n \in \mathbb{N}$,
\begin{align*}
   \boldsymbol{\lambda}\big(\mathfrak{H}_{(1,0)}(\mathcal{U}_n) \oplus \mathfrak{H}_{(0,1)}(\mathcal{U}_n)\big) & = \big\|\pi_{(1,0)} \oplus \pi_{(0,1)} :C(\mathcal U_n) \to  \mathfrak{H}_{(1,0)}(\mathcal{U}_n) \oplus \mathfrak{H}_{(0,1)}(\mathcal{U}_n)\big) \big\| \\
   &= 2n \int_{\mathcal{U}_n }|Re(\mathrm{tr} (V))| d V\,.
   \end{align*}
Moreover,
\begin{equation} \label{limit suma 1,0+0,1}
  \lim_{n\to \infty} \, \frac{\boldsymbol{\lambda}\big(\mathfrak{H}_{(1,0)}(\mathcal{U}_n) \oplus \mathfrak{H}_{(0,1)}(\mathcal{U}_n)\big)}{\sqrt{2} n}=\sqrt{\frac{2}{\pi}}\,.
\end{equation}
\end{corollary}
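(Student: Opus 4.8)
The plan is to read the statement off from the two propositions immediately preceding it, applied to the pair $S_1=\mathfrak{H}_{(1,0)}(\mathcal{U}_n)$ and $S_2=\mathfrak{H}_{(0,1)}(\mathcal{U}_n)$, and then to recycle the probabilistic argument already used for the limit in Theorem~\ref{mainU}. First I would check that the hypotheses of the preceding proposition on orthogonal direct sums are met: both $S_1$ and $S_2$ are $\mathcal{U}_n$-invariant and finite dimensional, hence closed in $L_2(\mathcal{U}_n)$; $S_1$ is accessible by Propositions~\ref{specialII} and~\ref{prop 12.2.7 for C(U)}, while $S_2$ is accessible by Proposition~\ref{relation with the conjungate space}; and their orthogonality in $L_2(\mathcal{U}_n)$ is precisely the instance $p+q=p'+q'=1$, $(1,0)\neq(0,1)$, of~\eqref{otto}. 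Granting these, the sum proposition delivers $\mathrm{t}_{S_1\oplus S_2}=\mathrm{t}_{S_1}+\mathrm{t}_{S_2}$ together with the corresponding norm and integral identities.

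Next I would compute the kernel explicitly. By Proposition~\ref{specialI} we have $\mathrm{t}_{(1,0)}=n\,\mathrm{tr}$, and applying Proposition~\ref{relation with the conjungate space} to $S=\mathfrak{H}_{(1,0)}(\mathcal{U}_n)$, whose conjugate space $S_\ast$ is $\mathfrak{H}_{(0,1)}(\mathcal{U}_n)$, yields $\mathrm{t}_{(0,1)}=\overline{\mathrm{t}_{(1,0)}}=n\,\overline{\mathrm{tr}}$. Adding, $\mathrm{t}_{S_1\oplus S_2}=n(\mathrm{tr}+\overline{\mathrm{tr}})=2n\,\mathrm{Re}(\mathrm{tr})$, so the integral formula from the sum proposition specializes to
\[
\boldsymbol{\lambda}(S_1\oplus S_2)=\int_{\mathcal{U}_n}|\mathrm{t}_{S_1}(V)+\mathrm{t}_{S_2}(V)|\,dV=2n\int_{\mathcal{U}_n}|\mathrm{Re}(\mathrm{tr}(V))|\,dV,
\]
which is exactly the first displayed equality of the corollary (the middle term being the norm $\|\pi_{(1,0)}\oplus\pi_{(0,1)}\|$).

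For the limit I would rewrite the normalized quantity as an expectation over the Haar measure,
\[
\frac{\boldsymbol{\lambda}(S_1\oplus S_2)}{\sqrt{2}\,n}=\mathbb{E}\big(\sqrt{2}\,|\mathrm{Re}(\mathrm{tr}(U(n)))|\big),
\]
where $U(n)$ is Haar distributed on $\mathcal{U}_n$. From the proof of the limit in Theorem~\ref{mainU}, the sequence $\sqrt{2}\,\mathrm{Re}(\mathrm{tr}(U(n)))$ converges in distribution to a standard real Gaussian $Z$; uniform integrability holds because $\mathbb{E}|\mathrm{Re}(\mathrm{tr}(U(n)))|^2\le \mathbb{E}|\mathrm{tr}(U(n))|^2=1$ by~\eqref{intformB}, so Remark~\ref{rem: bounded moment implies unif int} applies. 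Then~\eqref{thm: unif int + conv in dist implies conv in mean} passes the expectations to the limit and gives $\mathbb{E}|Z|=\sqrt{2/\pi}$, which is~\eqref{limit suma 1,0+0,1}. Since every ingredient is already in place, I do not expect a genuine obstacle; the one hypothesis that is not purely formal is the $L_2$-orthogonality of $S_1$ and $S_2$, which rests on~\eqref{otto}, and the only delicate point is the bookkeeping of the factor $\sqrt{2}$, chosen precisely so that $2n\int_{\mathcal{U}_n}|\mathrm{Re}(\mathrm{tr})|$ becomes the absolute first moment of the rescaled variable $\sqrt{2}\,\mathrm{Re}(\mathrm{tr})$ whose distributional limit is standard normal.
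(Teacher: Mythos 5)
Your proposal is correct and follows essentially the same route as the paper: the integral formula is read off from the orthogonal-sum proposition together with Proposition~\ref{specialI} and Proposition~\ref{relation with the conjungate space} (your explicit check of $L_2$-orthogonality via \eqref{otto} and the computation $\mathrm{t}_{(0,1)}=n\,\overline{\mathrm{tr}}$ are exactly the implicit ingredients), and the limit is obtained by the same Johansson convergence-in-distribution plus uniform-integrability argument used for \eqref{cor: lim proj constant trace}.
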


Before giving a proof of this, we mention that the denominator of the fraction above (so $\sqrt{2}  n$) is exactly the square root of the dimension of the sum space $\mathfrak{H}_{(1,0)}(\mathcal{U}_n) \oplus \mathfrak{H}_{(0,1)}(\mathcal{U}_n)$.
\begin{proof}[Proof of Corollary \ref{coro: sum 1,0 + 0,1}]

We only have to prove \eqref{limit suma 1,0+0,1} since the integral formula for the projection constant follows directly from \eqref{integral formula sum} and Proposition \ref{specialI}.

We repeat an argument similar to the proof of \eqref{cor: lim proj constant trace}.
We know, by \cite[Corollary 2.4]{johansson1997random}, that the sequence $\big(\sqrt{2} Re[\mathrm{tr}(U(n))]\big)$ of random  variables converges in distribution to a standard real Gaussian random variable $g$.
In particular, $\big(\sqrt{2} \vert  Re[\mathrm{tr}(U(n))] \vert \big)$
converges in distribution to $ \vert  g \vert$.
Note that the sequence $\big(\sqrt{2} \vert  Re[\mathrm{tr}(U(n))] \vert \big)$ is uniformly integrable. Indeed, $ \mathbb{E} (\vert Re[\mathrm{tr}(U(n))] \vert^2 ) \leq  \mathbb{E} (\vert \mathrm{tr} (U(n)) \vert^2) =1$
(see again  Remark \ref{rem: bounded moment
implies unif int}).
Thus by \eqref{thm: unif int + conv in dist implies conv in mean},
\begin{align*}
  \lim_{n \to \infty} \frac{\boldsymbol{\lambda}\big(\mathfrak{H}_{(1,0)}(\mathcal{U}_n) \oplus \mathfrak{H}_{(0,1)}(\mathcal{U}_n)\big)}{\sqrt{2}  n}
    = \lim_{n \to \infty} \mathbb{E} \big(\sqrt{2} \vert Re[\mathrm{tr}(U(n))] \vert \big)
   =   \mathbb{E} |g| = \frac{1}{\sqrt{2\pi}} \int_{\mathbb{R}} |x| e^{-\frac{{x}^2}{2}} dx = \sqrt{\frac{2}{\pi}}.
\end{align*}
This concludes the proof.
\end{proof}





\end{document}